\title{Planar graphs have exponentially many 3-arboricities}
\author{Ararat Harutyunyan\thanks{Research supported by FQRNT
  (Le Fonds qu\'{e}b\'{e}cois de la recherche sur la nature et les technologies)
  doctoral scholarship.}\\
  {Department of Mathematics}\\
  {Simon Fraser University}\\
  {Burnaby, B.C. V5A 1S6} \\
  email: {\tt aha43@sfu.ca}
\and
  Bojan Mohar\thanks{Supported in part by an NSERC Discovery Grant (Canada),
  by the Canada Research Chair program, and by the
  Research Grant P1--0297 of ARRS (Slovenia).}~\thanks{On leave from:
  IMFM \& FMF, Department of Mathematics, University of Ljubljana, Ljubljana,
  Slovenia.}\\
  {Department of Mathematics}\\
  {Simon Fraser University}\\
  {Burnaby, B.C. V5A 1S6} \\
  email: {\tt mohar@sfu.ca}
}
\newtheorem{theorem}{Theorem}[section]
\newtheorem{lemma}[theorem]{Lemma}
\newtheorem{corollary}[theorem]{Corollary}
\newtheorem{proposition}[theorem]{Proposition}
\newtheorem{conjecture}[theorem]{Conjecture}
\newcommand{\DEF}[1]{{\em #1\/}}
\newcommand{\C}{{\cal C}}
\begin{document}

\maketitle

\begin{abstract}
It is well-known that every planar or projective planar graph can
be 3-colored so that each color class induces a forest. This bound
is sharp. In this paper, we show that there are in fact
exponentially many 3-colorings of this kind for any (projective)
planar graph. The same result holds in the setting of
3-list-colorings.
\end{abstract}

{\bf Keywords:} Planar graph, vertex-arboricity, digraph chromatic
number.

\section{Introduction}

Motivation for this paper comes from two directions. One is
related to the arboricity of undirected planar graphs, the other
one to colorings of planar digraphs. Let us recall that a
partition of vertices of a graph $G$ into classes $V_1 \cup \,
\cdots \cup V_k$ is an \DEF{arboreal partition} if each $V_i$ ($1
\leq i \leq k$) induces a forest in $G$. A function $f \colon V(G)
\to \{1,\dots,k\}$ is called an \DEF{arboreal $k$-coloring} if
$V_i = f^{-1}(i)$, $i=1,\dots,k$, form an arboreal partition. The
\DEF{vertex-arboricity} $a(G)$ of the graph $G$ is the minimum $k$
such that $G$ admits an arboreal $k$-coloring. Note that $a(G)
\leq \chi(G) \leq 2 a(G)$, where $\chi(G)$ is the chromatic number
of $G$. Long ago, people asked if every planar graph has
arboricity 2 since this would imply the Four Color Theorem.
However, planar graphs of vertex-arboricty 3 have been found (see
Chartrand et al. \cite{CKW1968}).

Let $D$ be a digraph without cycles of length $\leq 2$, and let
$G$ be the underlying undirected graph of $D$. A function $f
\colon V(D) \to \{1,\dots,k \}$ is a \DEF{k-coloring} of the
digraph $D$ if $V_i = f^{-1}(i)$ is acyclic in $D$ for every
$i=1,\dots,k$. Here we treat the vertex set $V_i$ \DEF{acyclic} if
the induced subdigraph $D[V_i]$ contains no directed cycles (but
$G[V_i]$ may contain cycles). The minimum $k$ for which $D$ admits
a $k$-coloring is called the \DEF{chromatic number} of $D$, and is
denoted by $\chi(D)$ (see Neumann-Lara \cite{N1982}). Clearly,
$$ \chi(D) \leq a(G).$$

While planar graphs with arboricity 3 are known, no planar digraph
(without cycles of length $\leq 2$) with $\chi(D) > 2$ is known.
In fact, the following conjecture was proposed independently by
Neumann-Lara \cite{N} and \v{S}krekovski in \cite{BFJKM2004}.

\begin{conjecture} \label{conj:planar}
Every planar digraph $D$ with no directed cycles of length at most
$2$ has $\chi(D) \leq 2$.
\end{conjecture}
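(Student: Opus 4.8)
\quad It is convenient to first reformulate the statement: a 2-coloring $V(D)=V_1\cup V_2$ as in the conjecture has $D[V_2]$ acyclic precisely when $V_1$ meets every directed cycle of $D$, so the conjecture asserts that every planar digraph without digons admits a directed feedback vertex set that itself induces an acyclic subdigraph. With this in mind, the plan is to argue by contradiction: let $D$ be a counterexample minimizing $|V(D)|$ and, subject to that, $|E(D)|$, and let $G$ be its underlying plane graph. I would then combine a list of forbidden (``reducible'') configurations with a discharging argument over the plane embedding of $G$, in the spirit of proofs of planar coloring theorems.

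First I would extract the easy structural facts about $D$. (i) We may assume $D$ is connected and $G$ is $2$-connected, since a 2-coloring can be assembled from 2-colorings of the components, respectively blocks, after permuting colors to agree at the shared cut vertices; a directed cycle cannot cross a cut vertex, so acyclicity of the pieces yields acyclicity of the union. (ii) Every vertex has in-degree and out-degree at least $1$: a source or a sink lies on no directed cycle, hence can be added to either color class of a 2-coloring of $D$ minus that vertex. (iii) No vertex has degree $2$ in $G$: by (ii) such a vertex $v$ is a transit vertex $u\to v\to w$ with $u\ne w$ (else $uv$ is a digon), and a 2-coloring of the smaller digraph $D-v$ --- to which we add the arc $u\to w$ when $w\to u\notin E(D)$ --- extends to $v$, placing $v$ outside the common class of $u$ and $w$ whenever they are monochromatic. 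Hence $\delta(G)\ge 3$. The next, much harder, stage is to enlarge this list: forbid suitable pairs of adjacent low-degree vertices, short faces all of whose vertices have small degree, and similar local patterns, each time exploiting that a directed cycle through the configuration must close up through a directed path in $D$ minus the configuration, and that such a path, if present, already forces a monochromatic cycle in the reduced digraph.

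With a rich enough unavoidable set of reducible configurations, the plan is to finish by discharging: give each vertex $v$ charge $d(v)-6$ and each face $f$ charge $2\,d(f)-6$, so that the total is $-12$ by Euler's formula, design rules sending charge from high-degree vertices and long faces to the excluded-configuration sites, and reach the contradiction that every vertex and face ends with nonnegative charge. The main obstacle --- and the real heart of the argument --- is reducibility: unlike ordinary coloring or list-coloring, the constraint ``$D[V_i]$ acyclic'' is global, so deleting a bounded configuration and re-inserting it can fail because of long directed paths elsewhere in $D$, and it is unclear which small configurations are genuinely reducible, or whether a bounded reducible unavoidable set exists at all. I would therefore first test the scheme on cases where the acyclicity constraint is more tractable --- outerplanar digraphs, planar digraphs of large digirth (where long directed paths are scarce), and planar digraphs of bounded treewidth (where one can instead do dynamic programming over a tree decomposition) --- both as consistency checks and as a quarry for candidate reducible configurations in the general case.
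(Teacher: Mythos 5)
This statement is Conjecture~\ref{conj:planar}, which the paper explicitly presents as an \emph{open} problem (attributed to Neumann-Lara and to \v{S}krekovski); the paper does not prove it. What the paper actually proves is the much weaker Theorem~\ref{thm:main}/\ref{thm: main 3-list}: exponentially many arboreal $3$-colorings (and $3$-list-colorings), which gives $\chi(D)\le 3$ for planar digon-free digraphs but leaves the $2$-coloring conjecture untouched. So there is no ``paper's proof'' to compare against.

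Your submission is correspondingly not a proof but a strategy sketch, and you say so yourself: after the routine reductions you write that ``it is unclear which small configurations are genuinely reducible, or whether a bounded reducible unavoidable set exists at all.'' That sentence names exactly the gap, and it is fatal to the plan as stated. The preliminary observations are fine --- the reformulation as ``a directed feedback vertex set inducing an acyclic digraph,'' reduction to $2$-connected $G$, elimination of sources/sinks, and the degree-$2$ suppression with the added arc $u\to w$ (carefully avoiding a digon when $w\to u$ exists) all check out, and the charge assignment $d(v)-6$, $2d(f)-6$ summing to $-12$ is correct. But these give only $\delta(G)\ge 3$ plus Euler's formula, which is nowhere near enough for a discharging contradiction, and the hard half of the method --- producing reducible configurations --- is precisely where this problem has resisted everyone. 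As you note, acyclicity of a color class is a global constraint: deleting a local configuration $C$ and $2$-coloring $D-V(C)$ gives no control over which directed paths through $D-V(C)$ join the attachment vertices of $C$, so the usual ``extend a partial coloring to a small subgraph'' lemma (which the paper uses for the $3$-color version, see Lemma~\ref{lem:Basic Lemma}) has no analogue for $2$ colors. Unless you can exhibit at least one nontrivial reducible configuration beyond the degree-$\le 2$ ones and explain how to re-insert it without being defeated by long external directed paths, the proposal is a restatement of ``try discharging,'' not an argument. Your suggestion to test on outerplanar graphs, large digirth, or bounded treewidth is reasonable as a research program, but it does not constitute progress on the conjecture itself.
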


It is an easy consequence of 5-degeneracy  of planar graphs that
every planar digraph $D$ without cycles of length at most 2 and
its associated underlying planar graph $G$ satisfy
\begin{equation} \label{eqn:1}
\chi(D) \leq a(G) \leq 3.
\end{equation}

The main result of this paper is a relaxation of Conjecture
\ref{conj:planar} and a strengthening of the above stated
inequality (\ref{eqn:1}). In doing so, we also extend the result
from planar graphs to graphs embedded in the projective plane. In
particular, we prove the following.

\begin{theorem} \label{thm:main} Every planar or projective planar
graph of order $n$ has at least\/ $2^{n/9}$ arboreal
$3$-colorings.
\end{theorem}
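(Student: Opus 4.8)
The plan is to prove the (formally stronger) list version: for every planar or projective planar graph $G$ on $n$ vertices and every list assignment $L$ with $|L(v)|=3$ for all $v$, the number of arboreal $L$-colorings is at least $2^{n/9}$; Theorem~\ref{thm:main} is then the case $L\equiv\{1,2,3\}$. Two elementary facts will be used throughout. First, planar and projective planar graphs are $5$-degenerate, and if $v$ has at most five already-coloured neighbours with $|L(v)|\ge 3$, then some colour of $L(v)$ is used on at most one neighbour of $v$, so assigning $v$ that colour creates no monochromatic cycle through $v$; colouring along a $5$-degeneracy order therefore produces at least three arboreal $L$-colorings, which already proves the bound when $n\le 9$ (there $2^{n/9}\le 2$), giving the base cases of the induction. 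Second --- and this is the engine of the counting --- if $\deg(v)\le 3$ then at most one colour can appear on two or more neighbours of $v$, so \emph{regardless} of how $G-v$ is coloured at least \emph{two} colours of $L(v)$ remain available for $v$. Hence a vertex of degree at most $3$ is a ``reducible configuration'': deleting it, colouring the rest by induction, and then choosing $v$'s colour multiplies the count by at least $2$.

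The argument proper is an induction on a minimal counterexample, carried out for the stronger statement about plane \emph{near-triangulations with a precoloured boundary path}: the outer face is bounded by a cycle $C$, a subpath $P$ of $C$ with at most two vertices is precoloured, the remaining vertices of $C$ have lists of size $\ge 2$, interior vertices have lists of size $\ge 3$, and the count is measured as $2^{m/9}$ where $m$ is essentially the number of non-precoloured vertices. This generality lets us (a) delete a low-degree vertex and push its precoloured neighbours' colours into reduced lists of its remaining neighbours, and (b) split $G$ along a cut vertex, a chord of $C$, or a separating triangle, treating the shared vertices as precoloured, so that the two pieces are handled by induction; the exponent is chosen with enough slack to be superadditive under these splittings. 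After all reductions we may assume $G$ is a near-triangulation with no chord and no separating triangle.

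It then remains to exhibit, inside every such $G$, a reducible configuration on at most $9$ vertices: a set $S$, $|S|\le 9$, whose deletion (after the list adjustments above) yields an instance of the stronger statement on at least $|V(G)|-|S|$ non-precoloured vertices and for which every colouring of the smaller instance has at least two extensions to $S$. For a vertex of degree $\le 3$ this is immediate; when the minimum degree is $4$ or $5$ one uses such a vertex together with a bounded number of its neighbours, reading off two safe colours from the local near-triangulated structure. A discharging argument based on Euler's formula ($|E|\le 3n-6$ for planar graphs, $\le 3n-3$ for projective planar ones) shows that one of a fixed list of such configurations is always present, and it is the largest configuration on this list that pins the constant to $9$. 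For the projective planar case one can alternatively cut the surface along a noncontractible cycle to reduce to the plane statement with a precoloured doubled boundary, the bounded loss being absorbed by the slack in the exponent.

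The step I expect to be the main obstacle is the verification of the degree-$4$ and degree-$5$ reducible configurations, precisely because the arboricity constraint is \emph{global}: a colour class must contain no monochromatic cycle \emph{of any length}, not merely no monochromatic triangle, so ``can the colouring be extended to one more vertex?'' is not a local question the way it is for ordinary or list colouring. The degree-$\le 3$ case evades this entirely, but for higher degree one must rule out long monochromatic cycles through the newly coloured region --- which is where planarity does the real work, e.g.\ two equally coloured neighbours that are far apart in the rotation around $v$ cannot be joined by a monochromatic path confined to the relevant disc. Compiling a list of configurations that is simultaneously unavoidable (so discharging succeeds) and individually verifiable against this global constraint is the technical core of the proof.
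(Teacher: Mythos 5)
Your high-level strategy --- prove the list version, find unavoidable configurations by discharging against Euler's formula, show each is reducible in the sense that deleting $\le 9$ vertices leaves a graph whose arboreal $L$-colorings each extend in $\ge 2$ ways --- is exactly the strategy of the paper, and your two ``elementary facts'' are the paper's Lemma~\ref{lem:Basic Lemma} in disguise. But there are two genuine problems.

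First, the inductive framework you erect around this strategy is not the one the paper uses and would run into serious trouble. You propose a Thomassen-style strengthening: near-triangulations, a precoloured boundary subpath, lists of size $\ge 2$ on the remaining boundary, $\ge 3$ in the interior, with splitting along chords, cut vertices, and separating triangles. This machinery works for ordinary $5$-list-colouring because the constraint is local: a vertex only needs a colour different from its already-coloured neighbours. For arboreal colouring the constraint is global --- you must avoid monochromatic cycles of all lengths --- and the device of shrinking a list to size $2$ when a neighbour is precoloured no longer yields the clean invariant you need: a size-$2$ list guarantees a safe colour only when at most $3$ neighbours are coloured, and more to the point, ``safe'' cannot be read off the precoloured boundary alone, since whether two equally coloured neighbours close a cycle depends on a path that may leave the disc you are working in. You flag this difficulty yourself in your final paragraph, but it is not a side obstacle; it undermines the precoloured-boundary framework. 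The paper sidesteps all of it by simply triangulating $G$ (any arboreal $L$-colouring of the triangulation restricts to one of $G$), working entirely with triangulations, deleting the whole vertex set of each reducible configuration, and re-inducting on $|V|$ with plain $3$-lists throughout. No boundary conditions, no list shrinking, no chord or separating-triangle decomposition are needed. Your cut-along-a-noncontractible-cycle treatment of the projective plane also differs from the paper, which instead proves (Proposition~\ref{prop:pp}) that a projective planar graph is either a spanning subgraph of a triangulation or has two low-degree vertices whose removal leaves a planar graph; your ``doubled boundary'' plan inherits the same difficulties as the precoloured-boundary framework.

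Second, and more fundamentally, the technical core of the theorem --- compiling a list of configurations that is simultaneously unavoidable under discharging and verifiable as reducible against the global no-monochromatic-cycle constraint --- is left entirely undone, and you say as much. In the paper this is almost the whole content: Section~2 gives $23$ configurations and a discharging argument with eight rules to show one must appear in any (projective) planar triangulation, and Section~3 shows each is reducible, with the degree-$4$ configurations $Q_6$, $Q_7$, and $Q_{14}$ requiring careful ad hoc arguments precisely because two colours can each appear twice around a degree-$4$ vertex and one must then reason about which colour is forced and why some safe colour survives. A plan that defers all of this is a plan, not a proof; as written the proposal does not establish the theorem.
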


\begin{corollary} \label{cor:main}
Every planar or projective planar digraph of order $n$ without
cycles of length at most\/ $2$ has at least\/ $2^{n/9}\,$
$3$-colorings.
\end{corollary}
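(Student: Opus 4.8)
The plan is to prove Theorem~\ref{thm:main} by an induction that repeatedly removes small ``reducible'' pieces, and then to deduce Corollary~\ref{cor:main} at once. Call $C\subseteq V(G)$ \emph{reducible} if $|C|\le 9$ and every arboreal $3$-coloring of $G-C$ extends to at least two arboreal $3$-colorings of $G$. The core assertion is that every nonempty planar or projective planar graph contains a reducible set. Granting this, Theorem~\ref{thm:main} follows by induction on $n=|V(G)|$: the empty graph has $2^{0}=1$ arboreal $3$-colorings; and if $C$ is reducible with $|C|=c\le 9$, then $G-C$ is again planar or projective planar, so it has at least $2^{(n-c)/9}$ arboreal $3$-colorings, every arboreal $3$-coloring of $G$ restricts to one of these, and distinct restrictions have disjoint families of (at least two) extensions, whence $G$ has at least $2\cdot 2^{(n-c)/9}=2^{(n-c)/9+1}\ge 2^{n/9}$ of them --- the last step being exactly where $c\le 9$ is used. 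For Corollary~\ref{cor:main}, if $D$ has no cycle of length $\le 2$ and $G$ is its underlying (planar or projective planar) graph, then any vertex set inducing a forest in $G$ cannot contain a directed cycle of $D$, so each of the $\ge 2^{n/9}$ arboreal $3$-colorings of $G$ is a $3$-coloring of $D$.

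Two ideas furnish all the small reducible sets. First, any vertex $v$ with $\deg(v)\le 3$ is reducible by itself: among its $\le 3$ neighbors at most one color occurs twice, so at least two classes $V_i$ of $G-v$ meet the neighborhood of $v$ in at most one vertex, and adding $v$ to such a class keeps it a forest. Since planar and projective planar graphs are $5$-degenerate (Euler's formula), we may assume $\delta(G)\ge 4$, and there I would build on the standard fact behind $a(G)\le 3$ --- a vertex of degree $\le 5$ cannot have two neighbors in each of the three classes of a coloring of the rest --- but in the stronger form: a vertex $v$ of degree $4$ or $5$ still admits two extensions \emph{unless at most one of the three classes contains at most one neighbor of $v$}, the tight splits being $(2,2,0)$ in degree $4$ and $(3,2,0)$ or $(2,2,1)$ in degree $5$. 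These tight vertices must be packaged into larger (still $\le 9$-vertex) reducible configurations.

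That some such configuration is unavoidable is a discharging argument: give each vertex $v$ charge $\deg(v)-6$ and each face $f$ charge $2\deg(f)-6$, so the total charge is $-12$ on the sphere and $-6$ on the projective plane, hence negative; then redistribute by local rules designed so that, in the absence of every configuration on a prescribed short list --- the degree-$\le 3$ vertex, adjacent or nearby low-degree vertices, a tight degree-$4$ or degree-$5$ vertex together with enough of its neighborhood, short chains of degree-$4$/$5$ vertices, and similar patterns, each on at most $9$ vertices --- every vertex and face would end nonnegative, a contradiction. For each listed configuration $C$ one then checks, by ordering $V(C)$ and counting as in the degree-$\le 3$ case, that every arboreal $3$-coloring of $G-C$ has at least two extensions to $C$; at a tight vertex the second extension is obtained by using the surrounding structure to split an overloaded color class (so that two of $v$'s neighbors in it lie in different trees). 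The identical scheme with list constraints in place of ``forest'' constraints yields the $3$-list-coloring statement promised in the abstract.

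The main obstacle is calibrating the discharging rules and the configuration list so that the list is at once \emph{unavoidable} and \emph{strongly reducible} --- two extensions, not the single one that merely proves $a(G)\le 3$ --- while keeping every configuration within $9$ vertices, which is precisely the threshold making $2^{(n-c)/9+1}\ge 2^{n/9}$ tight at $c=9$. The delicate part is the tight degree-$5$ configurations, where one admissible extension must be upgraded to two by a non-local recoloring; and the projective plane is an extra complication, since its Euler deficit is only $6$, so the rules must be sharper (or the list longer) than on the sphere.
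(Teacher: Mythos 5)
Your derivation of the corollary from Theorem~\ref{thm:main} is exactly the paper's: for a digraph $D$ without digons, the underlying graph $G$ is simple and planar (or projective planar), and since any color class inducing a forest in $G$ cannot contain a directed cycle of $D$, every arboreal $3$-coloring of $G$ is automatically a $3$-coloring of $D$, giving at least $2^{n/9}$ of them. Your accompanying sketch of the proof of Theorem~\ref{thm:main} itself --- reduction to triangulations, discharging with $\deg(v)-6$ charges, and a list of unavoidable configurations on at most $9$ vertices each admitting two extensions --- is likewise the paper's strategy.
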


Let us observe that Theorem \ref{thm:main} cannot be extended to
graphs embedded in the torus since $a(K_7)=4$ and $K_7$ admits an
embedding in the torus. However, for every orientation $D$ of
$K_7$, we have $\chi(D) \leq 3$ (and in some cases $\chi(D)=3$);
and it follows from the main result in \cite{HM2011} that every
orientation of a (simple) graph embeddable in the torus satisfies
$\chi(D) \leq 3$. So it is possible that Corollary \ref{cor:main}
extends to the torus. Graphs on the Klein Bottle behave nicer
since $K_7$ can not be embedded in the Klein Bottle.
\v{S}krekovski \cite{S2002} and Kronk and Mitchem \cite{KM1974}
have shown that these graphs have arboricity at most 3.

It can be shown that a graph on the torus has arboricity at most 3
unless it contains $K_7$ as a subgraph. This can be used to prove
that for every graph $G$ embeddable in the torus, there exists an
edge $e \in E(G)$ such that $a(G - e) \leq 3$. In this vein, we
conjecture the following.

\begin{conjecture} For every graph $G$ embeddable in the torus,
there exists an edge $e \in E(G)$ such that $G - e$ has
exponentially many 3-arboreal colorings.
\end{conjecture}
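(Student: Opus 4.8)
The plan is to reduce the conjecture to a self-contained counting lemma together with a topological structure statement, bypassing the $K_7$ classification entirely. Call a subgraph $H$ of a simple toroidal graph \emph{saturated} if $|E(H)| = 3|V(H)|$. Since planar simple graphs on $k \ge 3$ vertices have at most $3k-6$ edges, a saturated subgraph with $\ge 3$ vertices is non-planar, so every toroidal embedding of it is cellular, and the Euler formula then forces all faces to be triangles; thus a saturated subgraph is a triangulation of the torus, embedded cellularly inside any toroidal embedding of the host graph, and so it ``fills'' the torus. The first ingredient is the \emph{Counting Lemma}: every $5$-degenerate graph $\Gamma$ with $|E(\Gamma)| \le 3|V(\Gamma)|$ has at least $2^{|V(\Gamma)|/4}$ arboreal $3$-colorings. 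To prove it, fix a $5$-degeneracy order $v_1, \dots, v_n$ and color greedily; writing $b_i$ for the number of neighbors of $v_i$ among $v_1, \dots, v_{i-1}$, we have $b_i \le 5$ for all $i$ while $\sum_i b_i = |E(\Gamma)| \le 3n$, so at least $n/4$ indices have $b_i \le 3$. When $v_i$ is reached, at most $\lfloor b_i/2 \rfloor$ colors occur at least twice among its colored neighbors, so at least $3 - \lfloor b_i/2 \rfloor$ colors occur at most once and are legal for $v_i$ (in such a color $v_i$ becomes a leaf of its class, creating no monochromatic cycle). Hence the partial coloring is always completable, and each of the $\ge n/4$ ``light'' steps offers at least two legal colors no matter how the earlier vertices were colored; multiplying these independent choices yields the bound.

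The second ingredient is the \emph{Structure Lemma}: every toroidal graph $G$ with at least one edge has an edge $e$ with $G - e$ saturated-subgraph-free. If $G$ has no saturated subgraph, any $e$ works, as deleting an edge never creates one. Otherwise I would prove that $G$ has a \emph{unique minimal} saturated subgraph $H^{*}$, contained in every saturated $H \subseteq G$; then any $e \in E(H^{*})$ works, since a saturated subgraph of $G - e$ would be a saturated subgraph of $G$ containing $H^{*}$, hence containing $e$. For the uniqueness claim, note first that two vertex-disjoint saturated subgraphs cannot coexist: one of them fills the torus, so the other --- being connected and disjoint from it --- lies in a single (disk) face and is therefore planar, contradicting that a torus triangulation is non-planar. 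Hence any two saturated $H_1, H_2 \subseteq G$ share a vertex, and feeding the bound $|E(\cdot)| \le 3|V(\cdot)|$ for the toroidal graphs $H_1 \cup H_2$ and $G[V(H_1) \cap V(H_2)]$ into inclusion--exclusion forces $|E(H_1) \cap E(H_2)| = 3\,|V(H_1) \cap V(H_2)|$ and makes $G[V(H_1) \cap V(H_2)]$ saturated. Taking $H^{*}$ of minimum order and applying this with $H_1 = H^{*}$, $H_2 = H$ arbitrary gives $V(H^{*}) \subseteq V(H)$ and then $E(H^{*}) \subseteq E(H)$, i.e.\ $H^{*} \subseteq H$.

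Assembling the pieces: given a toroidal $G$ of order $n$ with an edge, pick $e$ by the Structure Lemma. Every subgraph $H$ of $G - e$ is toroidal, so $|E(H)| \le 3|V(H)|$, and equality is excluded since $G - e$ has no saturated subgraph; thus every subgraph of $G - e$ has average degree below $6$ and hence a vertex of degree at most $5$, so $G - e$ is $5$-degenerate. As $|E(G - e)| \le 3n$, the Counting Lemma gives $G - e$ at least $2^{n/4}$ arboreal $3$-colorings. (When $G$ is disconnected nothing changes: all saturated subgraphs lie in one component, so the chosen $e$ destroys them all, and the Counting Lemma is applied to $G - e$ as a whole.)

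I expect the main obstacle to be the Structure Lemma, and specifically its topological core --- that a torus triangulation leaves no room for a disjoint second one --- together with the care needed to pin down the Euler-formula equality cases (a saturated toroidal subgraph really is a cellular triangulation; the intersection of two saturated subgraphs is again saturated). The Counting Lemma and the final assembly are routine. A pleasant feature of this route is that it avoids the classification of toroidal graphs of vertex-arboricity $4$ and delivers the exponent $n/4$, improving on the $n/9$ of Theorem~\ref{thm:main}.
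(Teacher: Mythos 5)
The statement you chose is a \emph{conjecture}---the paper offers no proof of it. The authors only remark that a $K_7$-classification result (a toroidal graph has vertex-arboricity at most $3$ unless it contains $K_7$) can be used to show that some $e$ yields $a(G-e)\le 3$. You therefore cannot be compared against a paper proof; instead I checked your argument on its own, and it appears to be complete and correct, and it takes a genuinely different route than the one the authors gesture at.

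Your route replaces the $K_7$-classification with the notion of a \emph{saturated} subgraph ($|E(H)|=3|V(H)|$). The key observations---(i) a saturated toroidal subgraph is connected, non-planar, hence cellularly embedded and a torus triangulation whose faces are all disks; (ii) two such subgraphs cannot be vertex-disjoint since one would sit inside a disk face of the other; (iii) the toroidal edge bound applied to union and intersection via inclusion--exclusion shows the family of saturated subgraphs is closed under union and intersection, yielding a unique minimal one $H^{*}$; (iv) deleting any edge of $H^{*}$ leaves a saturated-free, hence $5$-degenerate, toroidal graph---are each verifiable with Euler's formula and the additivity of genus over components. The Counting Lemma's greedy colouring with at least $n/4$ "light" steps ($b_i\le 3$) is also correct: at a light step at most one colour repeats among the $\le 3$ coloured neighbours, so at least two safe colours remain, and the safe-colour criterion (colour appearing at most once among already-coloured neighbours) preserves acyclicity by the same induction Lemma~\ref{lem:Basic Lemma} uses. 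The one case that deserves a sentence you gloss over is $|V(H_1)\cap V(H_2)|\in\{1,\dots,6\}$: there inclusion--exclusion demands $|E(H_1)\cap E(H_2)|\ge 3|V_\cap|$, which already exceeds $\binom{|V_\cap|}{2}$, so those intersection sizes are impossible and $|V_\cap|\ge 7$ automatically; this should be stated explicitly. What your approach buys over the hinted $K_7$ route: it avoids the (nontrivial, uncited) classification of toroidal graphs with vertex-arboricity $4$, it works at the cleaner level of edge-counting and degeneracy, and it even improves the implicit exponent to $n/4$ from the paper's $n/9$. If you write this up carefully---in particular verifying the genus-additivity citation (Battle--Harary--Kodama--Youngs) and spelling out the small-intersection exclusion---this would resolve the authors' conjecture.
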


The proof of Theorem \ref{thm:main} is deferred until Section 4.
Actually, we shall prove an extended version in the setting of
list-colorings which we define next.

Let $\C$ be a finite set of colors. Given a graph $G$, let $L:
v\mapsto L(v)\subseteq \C$ be a \DEF{list-assignment} for $G$,
which assigns to each vertex $v\in V(G)$ a set of colors. The set
$L(v)$ is called the \DEF{list} (or the set of \DEF{admissible
colors}) for $v$. We say $G$ is \DEF{$L$-colorable} if there is an
\DEF{$L$-coloring} of $G$, i.e., each vertex $v$ is assigned a
color from $L(v)$ such that every color class induces a forest in
$G$. A \DEF{$k$-list-assignment} for $G$ is a list-assignment $L$
such that $|L(v)|=k$ for every $v \in V(G)$.

\begin{theorem} \label{thm: main 3-list} Let $L$ be a $3$-list-assignment for a planar
or projective planar graph $G$ of order $n$. Then $G$ has at
least\/ $2^{n/9}$ $L$-colorings.
\end{theorem}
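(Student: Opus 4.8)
The plan is to prove Theorem~\ref{thm: main 3-list} by induction on $n=|V(G)|$, constructing the $L$-colorings greedily along a well-chosen vertex ordering and using a discharging argument to guarantee that enough vertices have two admissible colors to choose from. We may assume $G$ is connected and simple: components multiply the count and $2^{n/9}$ is multiplicative, while a loop or a pair of parallel edges would force some color class to contain a cycle, so such a graph would have no $L$-coloring whatsoever and the theorem is implicitly about simple graphs.

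Here is the counting engine. Suppose we have an \emph{elimination order} $w_1,\dots,w_n$ of $V(G)$, and write $G_i=G-\{w_1,\dots,w_{i-1}\}$. Call index $i$ (and $w_i$) \emph{light} if $w_i$ has at most $5$ neighbors in $G_i$, and \emph{good} if it has at most $3$. We color in the reverse order $w_n,w_{n-1},\dots,w_1$; when we reach $w_i$ its already-colored neighbors are exactly $N(w_i)\cap\{w_{i+1},\dots,w_n\}$. Say a color $c\in L(w_i)$ is \emph{safe} if at most one already-colored neighbor of $w_i$ carries $c$; assigning a safe color to $w_i$ keeps every color class a forest, since the class of $c$ gains a vertex of degree $\le1$ inside it, so the induction that each color class stays a forest goes through. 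If $w_i$ is light then at most $\lfloor5/2\rfloor=2$ colors of $\C$ can fail to be safe (an unsafe color uses up two of its $\le5$ colored neighbors), so, as $|L(w_i)|=3$, at least one color is safe; if $w_i$ is good the same count gives at least two safe colors. Branching over the safe colors in the order $w_n,\dots,w_1$ therefore produces a rooted tree whose leaves are pairwise distinct $L$-colorings of $G$, at least $2^{\#\{\text{good }i\}}$ in number, provided every index is light. Hence it suffices to exhibit an elimination order in which every index is light and at least $n/9$ indices are good.

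We produce such an order by induction on $n$, the empty graph being the trivial base. If $\delta(G)\le3$, let $w_1$ be a vertex of degree $\le3$ (so index $1$ is light and good), apply the inductive hypothesis to the planar or projective-planar graph $G-w_1$ to get the rest of the order, and note the number of good indices is at least $1+\tfrac{n-1}{9}\ge\tfrac n9$. If $\delta(G)\ge4$, we invoke the following structural statement: \emph{every simple planar or projective-planar graph $G$ with $\delta(G)\ge4$ and $V(G)\ne\emptyset$ contains a set $S$ with $1\le|S|\le9$ that can be ordered $s_1,\dots,s_k$ (with $k=|S|$) so that for each $j$ the vertex $s_j$ has at most $5$ neighbors in $G-\{s_1,\dots,s_{j-1}\}$, and for at least one $j$ it has at most $3$.} Granting this, put $w_1,\dots,w_k:=s_1,\dots,s_k$, apply the inductive hypothesis to $G-S$ to fill in $w_{k+1},\dots,w_n$, and observe that the good-index count is at least $1+\tfrac{n-k}{9}\ge\tfrac n9$ because $k\le9$. (Here $G-w_1$ or $G-S$ may be disconnected, which is harmless since the theorem is being proved for all planar and projective-planar graphs, disconnected ones by multiplicativity.) Note also that the entire argument uses only $|L(v)|=3$ and the elementary safe-color count, never a fixed palette, so the case $L(v)\equiv\{1,2,3\}$ recovers Theorem~\ref{thm:main} and, via $\chi(D)\le a(G)$, Corollary~\ref{cor:main}.

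The remaining task — and the main obstacle — is the structural statement, which I would attack by discharging on a $2$-cell embedding of $G$ in the sphere or the projective plane (if no such embedding exists in the projective plane, $G$ is planar). Assign charge $\deg(v)-6$ to each vertex $v$ and $2\ell(f)-6$ to each face $f$ of length $\ell(f)$; the total is $-6\chi$, namely $-12$ (planar) or $-6$ (projective), hence negative, and since $\delta(G)\ge4$ the graph is simple with all faces of length $\ge3$, so every face has nonnegative charge and all deficit is carried by the vertices of degree $4$ and $5$. One then designs rules moving charge from vertices of degree $\ge7$ and faces of length $\ge4$ toward nearby vertices of degree $4$ and $5$, and argues that if $G$ avoids every configuration in a finite list $\mathcal H$ then every vertex and face ends with nonnegative charge, contradicting the negative total. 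The content lies in choosing $\mathcal H$ to be simultaneously \emph{unavoidable} (the discharging really closes; the projective case is the tighter one, since the total is only $-6$) and \emph{reducible}, meaning each $H\in\mathcal H$ has at most $9$ vertices and admits a peeling order as in the statement — typically $H$ contains a degree-$4$ or degree-$5$ vertex of $G$ that, once enough of its neighbors inside $H$ are removed first (each such removal being of a light vertex), drops to degree $\le3$ and supplies the required good index. Verifying unavoidability and reducibility of the full configuration list, together with the finitely many small graphs with $\delta\ge4$ that the discharging does not reach (checked directly, e.g.\ $K_5$ in the projective-planar case), is the technical heart of the proof.
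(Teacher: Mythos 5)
Your counting engine is sound and is essentially the paper's Lemma~\ref{lem:Basic Lemma}: coloring in reverse along an elimination order, a vertex with at most $5$ colored neighbors has a safe (i.e.\ arborescence-preserving) color, and one with at most $3$ colored neighbors has two. The gap is in the structural claim you defer to discharging. You assert that every planar or projective-planar $G$ with $\delta(G)\ge4$ contains a set $S$ of at most $9$ vertices admitting a ``light peeling order'' in which some vertex reaches degree $\le3$. This is \emph{not} what the paper's discharging yields, and several of the unavoidable configurations it produces provably fail your property. Take $Q_6$: a $4$-vertex $u$ whose four neighbors have degrees $7,7,7,6$ (say). To start a light peel, $w_1$ must have degree $\le5$ in $G$, so $w_1=u$; but the four neighbors then have degrees $6,6,6,5$ after removing $u$, and any further peeling of these (in any order) leaves every one of them at degree $\ge4$ at the moment it is removed. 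No enlargement of $S$ within $9$ vertices helps, because to drop $u$ (the only $5^-$-vertex in sight) to degree $3$ you would first have to delete a neighbor, each of which has degree $\ge6>5$. The same obstruction appears in $Q_7$ and $Q_{14}$. In other words, the strong degree-based reducibility you need is strictly weaker than what the paper's unavoidability theorem delivers, and the paper handles exactly these three configurations by a genuinely different mechanism: a case analysis on which colors appear twice on the already-colored neighborhoods, exploiting the $3$-element lists directly rather than a degree count. Your framework, which only tracks the number of colored neighbors, cannot recover those arguments.

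Two secondary points. First, you would still have to find a \emph{different} finite list of unavoidable configurations, each satisfying your stronger condition, and there is no evidence such a list exists; the authors' choice to work harder on $Q_6,Q_7,Q_{14}$ rather than perturb the discharging suggests this is not just a convenience. Second, the projective-plane case needs real care — not every projective-planar graph is a spanning subgraph of a triangulation. The paper's Proposition~\ref{prop:pp} handles this by showing that otherwise one can peel two low-degree vertices to reach a planar graph; your sketch does not address this issue at all.
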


Similarly, we define list colorings for digraphs, where we insist
that color classes induce acyclic subdigraphs. Corollary
\ref{cor:main} then extends, as a corollary to Theorem \ref{thm:
main 3-list} to the list coloring setting as well.

\section{Unavoidable configurations}

We define a \DEF{configuration} as a plane graph $C$ together with
a function $\delta \colon V(C) \to \mathbb{N}$ such that
$\delta(v) \geq deg_C(v)$ for every $v \in V(C)$. A plane graph
$G$ \DEF{contains} the configuration $(C, \delta)$ if there is an
injective mapping $h \colon V(C) \to V(G)$ such that the following
statements hold:

\begin{enumerate}
\item[(i)] For every edge $ab \in E(C)$, $h(a)h(b)$ is an edge of
$G$.
\item[(ii)] For every facial walk $a_1 \dots a_k$ in $C$,
except for the unbounded face, the image $h(a_1) \dots h(a_k)$ is a
facial walk in $G$.
\item[(iii)] For every $a \in V(C)$, the degree of $h(a)$ in $G$
is equal to $\delta(a)$.
\end{enumerate}

If $v$ is a vertex of degree $k$ in $G$, then we call it a
\DEF{$k$-vertex}, and a vertex of degree at least $k$ (at most
$k$) will also be referred to as a \DEF{$k^{+}$-vertex}
(\DEF{$k^{-}$-vertex}). A neighbor of $v$ whose degree is $k$ is a
\DEF{$k$-neighbor} (similarly \DEF{$k^{+}$-} and
\DEF{$k^{-}$-neighbor}).

The goal of this section is to prove the following theorem.

\begin{theorem}
\label{thm:unavoidable}
Every planar or projective planar triangulation contains one of the
configurations listed in Fig.~\ref{fig:1}.
\end{theorem}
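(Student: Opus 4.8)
The plan is to use the discharging method, which is the standard technique for proving that planar (and more generally bounded-genus) graphs must contain one of a finite list of ``unavoidable'' configurations. Suppose for contradiction that $G$ is a planar or projective planar triangulation that avoids every configuration in Fig.~\ref{fig:1}; without loss of generality we may take $G$ to be connected, and since it is a triangulation we may assume $n$ is large (small cases are handled directly). First I would record the basic structural facts: since $G$ is a triangulation of the sphere or the projective plane, Euler's formula gives $|E| = 3|V| - 3\varepsilon$ where $\varepsilon \in \{2,1\}$ is the Euler characteristic, so $\sum_{v}\bigl(\deg(v) - 6\bigr) = -6\varepsilon \le -6 < 0$. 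Thus, assigning each vertex $v$ the initial charge $\mathrm{ch}(v) = \deg(v) - 6$, the total charge is strictly negative. Also, a triangulation has minimum degree at least $3$, and no configuration in the list being present will impose additional local restrictions (e.g., certain adjacencies among low-degree vertices are forbidden, certain faces incident to low-degree vertices must have high-degree third vertices, etc.).

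Next I would define discharging rules that redistribute charge from high-degree vertices ($7^{+}$-vertices, which have positive charge) to low-degree vertices ($3$-, $4$-, and $5$-vertices, which have negative charge), leaving the total charge unchanged. A typical rule set would have each $k$-vertex with $k \ge 7$ send a fixed amount (such as $1$, or $\tfrac{1}{2}$, or $\tfrac{2}{3}$, depending on the precise configurations) across each incident face to the low-degree vertices on that face, with possibly different amounts depending on the degrees of the other two vertices of the triangular face. The rules must be chosen so that after discharging, every vertex ends up with nonnegative charge $\mathrm{ch}^{*}(v) \ge 0$: for a $7^{+}$-vertex this is a direct computation (it starts with enough surplus and gives away a bounded amount), while for each low-degree vertex one uses the fact that $G$ avoids the listed configurations to guarantee that enough of its neighbors/incident faces are ``good'' (i.e.\ carry a high-degree vertex that pays into $v$). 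But $\sum_v \mathrm{ch}^{*}(v) = \sum_v \mathrm{ch}(v) < 0$, a contradiction, so $G$ must contain one of the configurations.

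The configurations in Fig.~\ref{fig:1}, which I have not seen, will dictate the exact discharging rules; conversely, the method works backwards — one designs the rules first, checks the $7^{+}$-vertices automatically, and then reads off precisely which local pictures around $3$-, $4$-, and $5$-vertices would allow the charge to stay negative; those pictures (with the appropriate $\delta$ values recording the degrees forced by the analysis) become the unavoidable set. Because $G$ is a triangulation, the neighborhood of a low-degree vertex $v$ is a cycle $C_v$ of length $\deg(v)$, which makes the case analysis around $v$ essentially a question about the degree sequence along $C_v$ and the faces it bounds, matching condition (ii) in the definition of ``contains a configuration.'' The projective planar case is handled with exactly the same rules; only the right-hand side of the Euler bound changes from $-12$ to $-6$, which is still negative, so nothing in the argument breaks.

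The main obstacle will be getting the discharging rules exactly right so that \emph{every} low-degree vertex is discharged to nonnegativity using \emph{only} the absence of the configurations in Fig.~\ref{fig:1} — in particular the $5$-vertices, which have charge only $-1$ and many neighbors, tend to be the delicate case, since a $5$-vertex all of whose neighbors have moderate degree (say $6$) receives nothing and stays at $-1$; such a configuration must therefore itself appear in the list, or the rules must allow $6$-vertices to act as conduits passing charge along. Balancing ``which low-degree local structures we are willing to list'' against ``how generous the discharging rules must be'' is the real content of the proof, and I would expect the bulk of the work to be the finite but somewhat tedious verification, face-by-face and vertex-type by vertex-type, that the chosen rules indeed leave no vertex with negative final charge once the listed configurations are excluded.
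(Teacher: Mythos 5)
Your high-level plan matches the paper's proof exactly: assign initial charge $\mathrm{ch}(v)=\deg(v)-6$, note that Euler's formula forces $\sum_v \mathrm{ch}(v)\in\{-12,-6\}$, move charge from $7^+$-vertices to $4$- and $5$-vertices via local rules, and derive a contradiction from nonnegative final charge. You also correctly anticipate several structural features of the actual argument: $6$-vertices play no role in the paper's rules (they neither give nor receive), so the isolated $5$-vertex surrounded by low-degree vertices must itself be one of the excluded configurations (it is, namely $Q_4$); and the projective plane changes only the constant on the right-hand side, not the rules.

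However, as written the proposal is a description of \emph{how} one would prove such a theorem, not a proof of \emph{this} theorem. The genuine content here lies entirely in (i) the eight specific discharging rules R1--R8, which send amounts $1/3$, $1/2$, $2/3$, $3/4$, $1$, or $3/2$ to $4$- and $5$-vertices depending rather finely on the degree multiset of the $4$-vertex's other three neighbors (e.g.\ distinguishing a ``bad'' $4$-vertex, one with neighbor degrees $8^+,7,6,6$), and (ii) the vertex-by-vertex verification for each degree class $4,5,6,7,8,9,10,11^+$ that the excluded configurations guarantee $\mathrm{ch}^*(v)\ge 0$ --- including a nontrivial re-distribution trick for $11^+$-vertices where each $4$-neighbor ``spreads'' its received charge back to its two neighbors along the link cycle, and a delicate parity argument when $d=11$. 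You explicitly defer exactly this work (``the configurations... will dictate the exact discharging rules,'' ``the bulk of the work... is the tedious verification''). Since none of the rules are pinned down and none of the cases are checked, the proposal does not establish that the set in Fig.~\ref{fig:1} is in fact unavoidable; it establishes only that \emph{some} finite set could be made unavoidable by \emph{some} rule set, which is a much weaker statement. To close the gap you would need to state the rules explicitly and then carry out the per-degree case analysis, using in each case precisely which $Q_i$ are excluded to bound the number of low-degree neighbors and the amount sent to each.
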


\begin{figure}[t!]
   \centering
   \includegraphics[width=11.2cm]{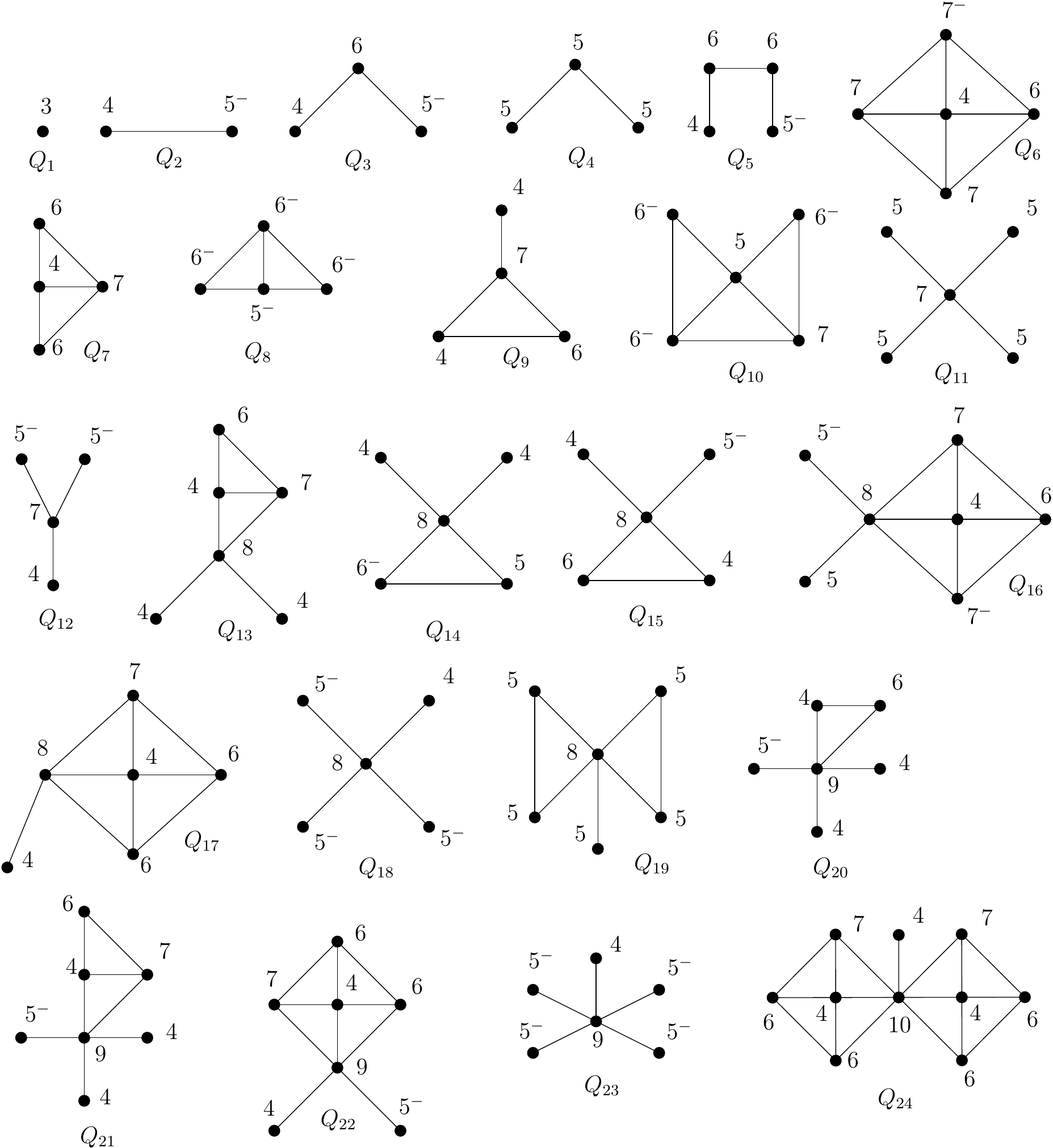}
   \caption{Unavoidable configurations. The listed numbers refer to the degree function $\delta$,
   and the notation $d^{-}$ at a vertex $v$ means all such configurations where the value $\delta(v)$ is
   either $d$ or $d-1$.}
   \label{fig:1}
\end{figure}

\begin{proof} The proof uses the discharging method. Assume, for a
contradiction, that there is a (projective) planar triangulation
$G$ that contains none of the configurations shown in
Fig.~\ref{fig:1}. We shall refer to these configurations as $Q_1,
Q_2, \dots, Q_{23}$.

Let $G$ be a counterexample of minimum order. To each vertex $v$
of $G$, we assign a {\em charge} of $c(v)=\deg(v) - 6$. A well-known
consequence of Euler's formula is that the total charge is always
negative, $\sum_{v \in V(G)} c(v) = -12$ in the plane and $\sum_{v
\in V(G)} c(v) = -6$ in the projective plane, see \cite{MT2001}.
We are going to apply the following \DEF{discharging rules}:

\begin{itemize}
\item[R1:] A 7-vertex sends charge of $1/3$ to each adjacent
5-vertex.

\item[R2:] A 7-vertex sends charge of $1/2$ to each adjacent
4-vertex.

\item[R3:] An $8^{+}$-vertex sends charge of $1/2$ to each
adjacent 5-vertex.

\item[R4:] An $8^{+}$-vertex sends charge of $2/3$ to each
adjacent 4-vertex whose neighbors have degrees $8^{+}$, $8^{+}$,
$8^{+}$, 6.

\item[R5:] An $8^{+}$-vertex sends charge of $3/4$ to each
adjacent 4-vertex whose neighbors have degrees $8^{+}$, $8^{+}$,
$7$, 6.

\item[R6:] An $8^{+}$-vertex sends charge of $1/2$ to each
adjacent 4-vertex whose neighbors have degrees $8^{+}, 7^{+},
7^{+}, 7^{+}$.

\item[R7:] An $8^{+}$-vertex sends charge of $1$ to each adjacent
4-vertex whose neighbors have degrees $8^{+},8^{+},6,6$ or
$8^{+},7,7,6$.

\item[R8:] An $8^{+}$-vertex sends charge of $3/2$ to each
adjacent 4-vertex whose neighbors have degrees $8^{+},7, 6, 6$.
\end{itemize}
Let $c^{*}(v)$ be the \DEF{final charge} obtained after applying
rules R1--R8 to all vertices in $G$. We will show that every
vertex has non-negative final charge. This will yield a
contradiction since the initial total charge of $-12$ (or $-6$ in
the projective plane) must be preserved.

We say that a $4$-vertex is \DEF{bad} if its neighbors have
degrees $8^{+}, 7, 6, 6$, i.e., the rule R8 applies to it and its
$8^{+}$-neighbor. Let us observe that the clockwise order of
degrees of the neighbors of a bad vertex is $8^{+},7,6,6$ (or
$8^{+},6,6,7$) since $Q_7$ is excluded.

First, note that $G$ has no $3^{-}$-vertices since the
configuration $Q_1$ is excluded and since a triangulation cannot
have $2^{-}$-vertices. We will also have in mind that $Q_2$ is
excluded, so every neighbor of a 4-vertex is a $6^{+}$-vertex.

\bigskip

\textbf{4-vertices:} Let $v$ be a 4-vertex. Note that $v$ has only
$6^{+}$-neighbor. If all neighbors have degree at most 7, then
they all have degree exactly 7 since $Q_6, Q_7$ and $Q_8$ are
excluded. Since the vertex $v$ has initial charge of $-2$, and
each 7-neighbor sends a charge of $1/2$ to it, the final charge of
$v$ is $0$.

Now, assume that $v$ is adjacent to an $8^{+}$-vertex. First,
assume that the remaining three neighbors $v_1, v_2, v_3$ of $v$
are all $7^{-}$-vertices. The vertices $v_1,v_2,v_3$ cannot all
have degree 6 since $Q_8$ is excluded. If $\deg(v_1) = 7$ and
$\deg(v_2) = \deg(v_3) = 6$, then the rules R2 and R8 imply that
$v$ receives a charge of $2$, resulting in the final charge of
$0$. If $\deg(v_1) = \deg(v_2) = 7$ and $\deg(v_3) =6$, then by
rules R2 and R7, $v$ again receives a charge of $2$. The case
where $\deg(v_1) = \deg(v_2) = \deg(v_3) = 7$ is similar through
rules R2 and R6.

Next, assume that $v$ has exactly two $8^{+}$-neighbors $v_1,
v_2$. If the remaining two vertices $v_3, v_4$ are both
7-vertices, then rules R2 and R6 imply that $v$ receives a total
charge of $2$, giving it the final charge of $0$. If the remaining
two vertices are both 6-vertices, then rule R7 implies that $v$
receives a total charge of $2$, resulting in $0$ final charge.
Therefore, we may assume that $\deg(v_3) = 7$ and $\deg(v_4) = 6$.
In this case, both $v_1$ and $v_2$ send a charge of $3/4$ to $v$
by R5, and $v_3$ sends a charge of $1/2$, resulting in a final
charge of $0$ for $v$.

Finally, assume that $v$ has at least three $8^{+}$-neighbors. By
rule R4 (if $v$ has a 6-neighbor), or by rules R2 and R6 (if $v$
has a 7-neighbor), or by rule R6 (otherwise), we see that $v$
receives total charge of $2$, so $c^{*}(v) = 0$.

\medskip
\textbf{5-vertices:} Let $v$ be a 5-vertex. Note that $v$ is not
adjacent to a 4-vertex. If all neighbors of $v$ are
$7^{-}$-vertices, then exclusion of $Q_4$, $Q_8$ and $Q_{10}$
implies that $v$ has at least three 7-neighbors. By R1, each such
neighbor sends a charge of $1/3$ to $v$. Since $v$ has initial
charge of $-1$, its final charge is at least $0$. Next, suppose
that $v$ has an $8^{+}$-neighbor. If $v$ has at least two
$8^{+}$-neighbors, then by rule R3, $v$ receives a charge of $1/2$
from each of them, yielding $c^{*}(v) \geq 0$. Therefore, we may
suppose that $v$ has exactly one $8^{+}$-neighbor. If $v$ has at
least two 7-neighbors, then by R1 and R3, $v$ receives a total
charge of at least $1/2 + 1/3 + 1/3 > 1$, resulting in a positive
final charge for $v$. Finally, if $v$ has at most one 7-neighbor,
then we get the configuration $Q_4$, $Q_8$ or $Q_{10}$.

\medskip
\textbf{6-vertices:} They have initial charge of $0$, and by the
discharging rules, they do not give or receive any charge, which
implies that they have a final charge of $0$.

\medskip
\textbf{7-vertices:} Let $v$ be a 7-vertex, and note that $v$ has
initial charge of 1. If $v$ has no 4-neighbors then it has at most
three 5-neighbors since $Q_{11}$ is excluded. Therefore, it sends
a charge of $1/3$ to each such vertex, resulting in a non-negative
final charge. Next, suppose that $v$ has at least one 4-neighbor.
Since $Q_{12}$ is excluded, $v$ has at most one other
$5^{-}$-neighbor. Therefore, $v$ sends a charge of at most $1/2 +
1/2 = 1$, resulting in the final charge of at least $0$ for $v$.

\medskip
\textbf{8-vertices:} An 8-vertex $v$ has initial charge of $+2$.
Since $Q_{17}$ is excluded, $v$ has at most three 4-neighbors.
First, suppose that $v$ has exactly three 4-neighbors. Let $u$ be
one of them. We claim that $v$ sends charge of at most $2/3$ to
$u$. Since $Q_{15}$ and $Q_{13}$ are excluded, we have that $N(u)
\backslash \{v\}$ contains vertices of degrees either $7^{+},
7^{+}, 7^{+}$ or $8^{+}, 8^{+}, 6$. In the first case, $v$ sends
charge $1/2$ to $u$, and in the second case charge $2/3$. Since
$v$ has no 5-neighbors (again, by exclusion of $Q_{17}$),
$c^{*}(v) \geq 2 - 3 \times 2/3 = 0$.

Next, suppose that $v$ has exactly two 4-neighbors, say $v_1$ and
$v_2$. We consider two subcases. First, assume that $v$ has a
5-neighbor. Excluding $Q_2$ and $Q_{14}$, no vertex in $N(v_1)
\cap N(v)$ and $N(v_2) \cap N(v)$ has degree at most 6. If the two
vertices in $N(v_1) \cap N(v)$ are both 7-vertices, then $v_1$ has
no $6^{-}$-neighbor ($Q_2$ and $Q_{15}$ being excluded). This
implies that $v$ sends charge of $1/2$ to $v_1$. Otherwise, the
two vertices in $N(v_1) \cap N(v)$ are an $8^{+}$ and a
$7^{+}$-vertex, respectively. This implies that by rules R4, R5
and R6, $v$ sends charge of $3/4$, $2/3$ or $1/2$ to $v_1$.
Therefore, in all cases, $v$ sends no more than $3/4$ charge to
$v_1$. An identical argument shows that $v$ sends a charge of at
most $3/4$ to $v_2$. Since $v$ sends a charge of $1/2$ to a
5-vertex, we have that $v$ sends a total charge of at most $3/4 +
3/4 + 1/2 = 2$. Secondly, assume that $v$ has no 5-neighbors.
Consider $v_1$. Excluding $Q_7$ and $Q_{16}$, $v_1$ is not a bad
4-vertex. Therefore, $v$ sends charge of at most $1$ to $v_1$. An
identical argument shows that $v$ sends charge of at most $1$ to
$v_2$. Therefore, the final charge of $v$ is non-negative.

Next, suppose that $v$ has exactly one 4-neighbor, say $v_1$.
First, suppose that $v_1$ is a bad 4-vertex. Excluding $Q_7$ and
$Q_{15}$, $v$ has at most one 5-neighbor. Since $v$ sends a charge
of at most $3/2$ to $v_1$ and charge $1/2$ to its 5-neighbor, its
final charge is at least $0$. Thus, we may assume that $v_1$ is
not a bad 4-vertex. Then $v$ sends at most charge of $1$ to $v_1$.
Because $Q_{17}$ is excluded, $v$ has at most two 5-neighbors, to
each of which it sends a charge of $1/2$. Therefore, $v$ sends a
total charge of at most $1 + 1/2 + 1/2= 2$, which implies that
$c^{*}(v) \geq 0$.

Finally, suppose that $v$ has no 4-neighbors. Excluding $Q_{18}$
and $Q_4$, $v$ has at most four 5-neighbors, to each of which it
sends charge of $1/2$. Therefore, the final charge of $v$ is
non-negative.

\medskip
\textbf{9-vertices:} A $9$-vertex $v$ has a charge of +3. Since
$Q_{22}$ is excluded, $v$ has at most four 4-neighbors. First,
suppose that $v$ has exactly four 4-neighbors or three 4-neighbors
and at least one 5-neighbor; let $u$ be one of the 4-neighbors. We
claim that $v$ sends charge of at most $2/3$ to $u$. Since
$Q_{20}$ and $Q_{19}$ are excluded, we have that $N(u) \backslash
\{v\}$ contains vertices of degrees $7^{+}, 7^{+}, 7^{+}$ or
$8^{+}, 8^{+}, 6$. In the first case, $v$ sends charge $1/2$ to
$u$, and in the second case charge $2/3$. Since $v$ has only one
5-neighbor (again, by exclusion of $Q_{22}$), $c^{*}(v) \geq 3 - 4
\times 2/3
> 0$.

Next, suppose that $v$ has exactly three 4-neighbors and no
5-neighbors. Since $Q_7$ and $Q_{21}$ are excluded, none of the
4-neighbors are bad. Therefore, in this case $v$ sends charge of
at most 1 to each 4-neighbor, resulting in a non-negative final
charge.

If $v$ has exactly two 4-neighbors, we consider two subcases. For
the first subcase, suppose that none of the 4-neighbors are bad.
Now, $v$ has at most two $5$-neighbors since $Q_{22}$ is excluded.
This implies that $v$ sends total charge of at most $1 + 1 + 1/2 +
1/2 = 3$ to its neighbors, resulting in a non-negative final
charge for $v$. For the second subcase, assume that $v$ has at
least one bad 4-neighbor. Now, the exclusion of $Q_{21}$ implies
that $v$ has no $5$-neighbors. Thus, $v$ sends total charge of at
most $3/2 + 3/2 = 3$, and therefore $c^{*}(v) \geq 0$.

Suppose now that $v$ has exactly one 4-neighbor. The exclusion of
$Q_{22}$ implies that $v$ has at most three 5-neighbors, and hence
it sends out a total charge of at most $3/2+ 1/2 + 1/2 + 1/2 = 3$,
resulting in $c^{*}(v) \geq 0$. Lastly, assume that $v$ has no
4-neighbors. Excluding $Q_4$ we see that $v$ has at most six
5-neighbors. This implies that $v$ sends a total charge of at most
$6 \times 1/2 = 3$ to its neighbors, thus $c^{*}(v) \geq 0$.

\medskip
\textbf{10-vertices:} A $10$-vertex $v$ has a charge of +4. Let
$v_1, \dots, v_{10}$ be the neighbors of $v$ in the cyclic order
around $v$. If $v_i$ is a bad 4-neighbor of $v$ and
$\deg(v_{i-1})=7$, $\deg(v_{i+1})= 6$, then the absence of $Q_3$
and $Q_9$ implies that $\deg(v_{i+2}) \geq 6$ and $\deg(v_{i-2})
\geq 5$. The absence of $Q_5$ also implies that if $v_{i+3}$ is
another bad 4-neighbor, then $\deg(v_{i+2})=7$, thus
$\deg(v_{i+4})=6$ and $\deg(v_{i+5})\geq 6$ (all indices modulo
10). By excluding $Q_{23}$ and $Q_4$, we conclude that if $v$ has
two bad 4-neighbors, then it has no other 4-neighbor and has at
most two 5-neighbors. This implies that $c^{*}(v) \geq 0$. Suppose
now that $v$ has precisely one bad 4-neighbor, say $v_2$. We may
assume $\deg(v_1) = 7$, $\deg(v_3)=6$ and by the arguments given
above, $\deg(v_{10}) \geq 5$, $\deg(v_4) \geq 6$. Excluding $Q_4$,
$v$ can have at most four 5-neighbors. Thus, the only possibility
that $c^{*}(v) < 0$ is that $v$ has three more 4-neighbors (and
the only way to have this is that the 4-neighbors are $v_5, v_7,
v_9$) or that $v$ has two more 4-neighbors and two 5-neighbors (in
which case 4-neighbors are $v_5$, $v_7$ and 5-neighbors are $v_9$,
$v_{10}$). In each of these cases, we see, by excluding $Q_3$ and
$Q_5$, that $\deg(v_4) \geq 7$, $\deg(v_6) \geq 7$ and $\deg(v_8)
\geq 7$. Thus, excluding $Q_9$, $v$ sends charge of at most $3/4$
to each of $v_5$ and $v_7$ and at most 1 together to both $v_9$
and $v_{10}$. Hence, $c^{*}(v) \geq 4 - 3/2 - 2 \times 3/4 - 1 =
0$.

Suppose now that $v$ has no bad 4-neighbors. If $v$ has five
4-neighbors, then they are (without loss of generality) $v_1, v_3,
v_5, v_7, v_9$, and excluding $Q_3$ we see that $\deg(v_j) \geq 7$
for $j= 2, 4, 6, 8, 10$. This implies (by the argument as used
above) that $v$ sends charge of at most $3/4$ to each 4-neighbor,
thus $c^{*}(v) \geq 4 - 5 \times 3/4 > 0$. Similarly, if $v$ has
one 5-neighbor $v_1$ and four 4-neighbors $v_3, v_5, v_7, v_9$,
then we see as above that $v$ sends charge of at most $3/4$ to
each 4-neighbor, and thus $c^{*}(v) \geq 4 - 4 \times 3/4 - 1/2 >
0$. If $v$ has three 4-neighbors, then the exclusion of $Q_4$
implies that it has at most two 5-neighbors. Similarly, if $v$ has
two 4-neighbors, then it has at most four 5-neighbors. If $v$ has
one 4-neighbor, then it has at most five 5-neighbors. If $v$ has
no 4-neighbors, it has at most six 5-neighbors. In each case,
$c^{*}(v) \geq 0$.

\medskip
\textbf{$11^{+}$-vertices:} Let $v$ be a $d$-vertex, with $d \geq
11$. Let $v_1,\dots,v_d$ be the neighbors of $v$ in cyclic
clockwise order, indices modulo $d$. Suppose that $v_i$ is a bad
4-vertex. Then we may assume that $\deg(v_{i-1})=7$ and
$\deg(v_{i+1})=6$ (or vice versa), since $Q_7$ is excluded. By
noting that the fourth neighbor of $v_i$ has degree 6, we see that
$\deg(v_{i+2}) \geq 6$ (since $Q_3$ is excluded) and
$\deg(v_{i-2}) \geq 5$ (since $Q_{9}$ is excluded). If $v_i$ is a
good 4-vertex, then its neighbors are $6^{+}$-vertices. Now, we
redistribute the charge sent from $v$ to its neighbors so that
from each bad 4-vertex $v_i$ we give $1/2$ to $v_{i-1}$ and $1/2$
to $v_{i+1}$, and from each good 4-vertex $v_i$ we give $1/4$ to
$v_{i-1}$ and $1/4$ to $v_{i+1}$. We claim that after the
redistribution, each neighbor of $v$ receives from $v$ at most
$1/2$ charge in total. This is clear for 4-neighbors of $v$. A
5-neighbor of $v$ is not adjacent to a 4-vertex, so it gets charge
of at most $1/2$ as well. The claim is clear for each 6-neighbor
of $v$ since it is adjacent to at most one 4-vertex ($Q_3$ is
excluded). If a 7-neighbor $v_j$ of $v$ satisfies $\deg(v_{j+1}) =
\deg(v_{j-1}) = 4$, the exclusion of $Q_9$ implies that both
$v_{j-1}$ and $v_{j+1}$ are good 4-vertices. Thus, the claim holds
for 7-neighbors of $v$. An $8^{+}$-neighbor of $v$ cannot be
adjacent to a bad 4-neighbor of $v$, and therefore it receives
charge of at most $1/2$ from $v$ after the redistribution. This
implies that if $d \geq 12$, then the final charge at $v$ is
$c^{*}(v)\geq c(v)-\tfrac{1}{2}d \geq 0$.

It remains to consider the case when $d=11$. In this case the same
conclusion as above can be made if we show that either the
redistributed charge at one of the vertices $v_i$ is 0, or that
there are two vertices whose redistributed charge is at most
$1/4$. If there exists a good 4-vertex, then there exists a good
4-vertex $v_i$, one of whose neighbors, say $v_{i-1}$, gets $1/4$
total redistributed charge. This is easy to see since $d=11$ is
odd and $Q_3$ and $Q_{9}$ are excluded. Let $t \geq 0$ be the
largest integer such that $v_i,v_{i+2},\dots,v_{i+2t}$ are all
good 4-neighbors of $v$. Then it is clear that $v_{i+2t+1}$ has
total redistributed charge $1/4$ and that $v_{i-1}\ne v_{i+2t+1}$
(by parity). This shows that the total charge sent from $v$ is at
most $5$, thus the final charge $c^{*}(v)$ is non-negative. Thus,
we may assume that $v$ has no good 4-neighbors. If $v$ has a bad
4-neighbor $v_i$, then we may assume that $\deg(v_{i-1})=7$ and
$\deg(v_{i+1})=6$. As mentioned above, we conclude that
$\deg(v_{i+2})\ge6$. We are done if this vertex has 0
redistributed charge. Otherwise, $v_{i+2}$ is adjacent to another
bad 4-neighbor $v_{i+3}$ of $v$. Since
$v_i,v_{i+1},v_{i+2},v_{i+3}$ do not correspond to the excluded
configuration $Q_5$, we conclude that $\deg(v_{i+2})=7$. Now we
can repeat the argument with $v_{i+3}$ to conclude that $v_{i+6},
v_{i+9}$ are also bad 4-vertices and $\deg(v_{i+8})=7$. However,
since $\deg(v_{i-1})=7$, we conclude that $v_{i+9}$ cannot be a
bad 4-vertex and hence there is a neighbor of $v$ with
redistributed charge 0.

Thus, $v$ has no 4-neighbors. Now the only way to send charge
$1/2$ to each neighbor of $v$ is that all neighbors of $v$ are
5-vertices. However, in this case we have the configuration $Q_4$.

To summarize, we have shown that the final charge of each vertex
is non-negative and this completes the proof.
\end{proof}

\section{Reducibility}

This section is devoted to the reducibility part of the proof of
our main result (Theorem \ref{thm: main 3-list}) using the
unavoidable configurations in Fig.~1. Let $G$ be a (projective)
planar graph and $L$ a $3$-list-assignment. It is sufficient to
prove the theorem when $G$ is a triangulation. Otherwise, we
triangulate $G$ and any $L$-coloring of the triangulation is an
$L$-coloring of $G$.\footnote[1]{While this argument is standard
for planar graphs, it is much less clear (and only conditionally
true) for the case of projective plane. The details about this
case are provided in the next section.} Of course, we only
consider arboreal $L$-colorings, and we omit the adverb
``arboreal" in the sequel.

A configuration $C$ contained in $G$ is called \DEF{reducible} if
$|C| \leq 9$ and any $L$-coloring of $G - V(C)$ can be extended to
an $L$-coloring of $G$ in at least two ways. Showing that every
triangulation $G$ contains a reducible configuration will imply
that $G$ has at least $2^{|V(G)|/9}$ arboreal $L$-colorings.

Here we prove our main theorem by showing that each configuration
from Section 5.2 is reducible. The following lemma will be used
throughout this section to prove reducibility.

\begin{lemma}
\label{lem:Basic Lemma} Let $G$~be a planar graph, $L$ a
$3$-list-assignment for $G$, and $v_1,\dots,v_k \in V(G)$. Let \/
$G_i = G - \{v_{i+1},\dots, v_k \}$ for $i=0,\dots,k$ and consider
the following properties:
\begin{enumerate}
\item[\rm (1)] For every $i=1,\dots,k$, $deg_{G_i}(v_i) \leq 5$.
\item[\rm (2)] There exists an $i$ such that $deg_{G_i}(v_i) \leq
3$.
\end{enumerate}
If $(1)$ holds, then every arboreal $L$-coloring of $G_0$ can be
extended to $G$. If both $(1)$ and $(2)$ hold, then every arboreal
$L$-coloring of $G_0$ can be extended to $G$ in at least two ways.
\end{lemma}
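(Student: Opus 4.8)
The plan is to process the vertices $v_1,\dots,v_k$ one at a time in reverse order — equivalently, to build up the graph from $G_0$ to $G_1$ to $\dots$ to $G_k=G$, adding one vertex at a time — and to use a greedy colour-choice argument at each step. Concretely, suppose we are given an arboreal $L$-colouring of $G_{i-1}$, and we wish to extend it to $G_i$ by choosing a colour for $v_i$ from $L(v_i)$; note that in $G_i$ the vertex $v_i$ has degree $\deg_{G_i}(v_i)$, which by property (1) is at most $5$. The point is that assigning $v_i$ a colour $c$ creates a monochromatic cycle through $v_i$ (i.e.\ destroys the forest property of colour class $c$) only if at least two neighbours of $v_i$ in $G_i$ already have colour $c$ and those two neighbours lie in the same component of the subgraph induced by colour class $c$ in $G_{i-1}$. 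So a colour $c\in L(v_i)$ is \emph{forbidden} for $v_i$ only if $v_i$ has two neighbours (in $G_i$) of colour $c$ that are joined by a monochromatic path in the already-coloured part.

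The key counting step is then: with $\deg_{G_i}(v_i)\le 5$, how many colours in $L(v_i)$ can be forbidden? I would argue that each forbidden colour "uses up" at least two of the (at most five) neighbours of $v_i$, but more care is needed because the same neighbour could in principle block a colour — so I count as follows. For a colour $c$ to be forbidden it must be that among the at most $5$ coloured neighbours of $v_i$, at least two carry colour $c$ and are connected monochromatically. If all three colours of $L(v_i)$ were forbidden, this would require, counting with multiplicity, at least $6$ coloured neighbours among the at most $5$ available — a contradiction. Hence at least one colour of $L(v_i)$ remains available, and the colouring extends to $G_i$. Iterating from $i=1$ to $k$ proves the first assertion.

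For the second assertion, let $i$ be an index with $\deg_{G_i}(v_i)\le 3$ as in property (2). Run the same extension process; when we reach step $i$, the vertex $v_i$ has at most $3$ coloured neighbours in $G_i$. A colour is forbidden only if at least two of those neighbours carry it (and are monochromatically connected), so at most one of the three colours of $L(v_i)$ can be forbidden, leaving at least \emph{two} admissible colours for $v_i$. Fix either of these two choices and then continue extending to $v_{i+1},\dots,v_k$ exactly as before (property (1) still guarantees each later vertex has degree $\le 5$ in its graph and hence at least one admissible colour). Since the two distinct choices at $v_i$ are never revisited, they yield two distinct arboreal $L$-colourings of $G$, completing the proof.

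The main obstacle — and the place where one must be slightly careful rather than hand-wavy — is the "at least two connected neighbours" characterisation of a forbidden colour: one has to verify that giving $v_i$ colour $c$ preserves the forest property of class $c$ precisely when $v_i$ does \emph{not} have two $c$-coloured neighbours in the same $c$-monochromatic component of $G_{i-1}$, and that adding $v_i$ cannot create a cycle in any other colour class. This is routine (a single new vertex of degree $d$ in colour class $c$ closes a cycle iff it has $\ge 2$ neighbours in one tree of that class), but it is the crux; the rest is the pigeonhole bookkeeping $2\cdot 3>5$ and $2\cdot 2>3$ described above.
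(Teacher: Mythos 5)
Your argument is essentially the paper's: colour $v_1,\dots,v_k$ greedily, noting that at step $i$ the vertex $v_i$ has at most five coloured neighbours, so by pigeonhole at least one colour of $L(v_i)$ appears at most once there (hence is safe), and at a vertex of degree at most three at least two colours are safe. The only difference is cosmetic — you invoke the sharper "two $c$-coloured neighbours in the same $c$-component" criterion for a colour being forbidden, whereas the paper uses the simpler sufficient condition "$c$ appears at most once on the neighbourhood"; both feed the same $2\cdot3>5$ and $2\cdot2>3$ counts.
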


\begin{proof}
Let $f$ be an $L$-coloring of $G_0$. Since $v_1$ has degree at
most 5 in $G_1$, there is a color $c \in L(v_1)$ such that $c$
appears at most once on $N_{G_1}(v_1)$. Therefore, coloring $v_1$
with $c$ gives an $L$-coloring of $G_1$. Repeating this argument,
we see that the $L$-coloring of $G_0$ can be extended to an
$L$-coloring of $G$ by consecutively $L$-coloring $v_1, v_2,\dots,
v_k$. If (2) holds for $i$, then there are actually two possible
colors that can be used to color $v_i$. Therefore, every
$L$-coloring of $G_0$ can be extended to $G$ in at least two ways.
\end{proof}

\begin{lemma}
\label{lem:C1} Configurations $Q_1,\dots, Q_5$, $Q_8,\dots,
Q_{13}$, $Q_{15},\dots, Q_{22}$ listed in Fig.~\ref{fig:1} are
reducible. The configuration $Q_{23}'$ that is obtained from
$Q_{23}$ by deleting the pendant vertex with $\delta(v)=4$ is also
reducible.
\end{lemma}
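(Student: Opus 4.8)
The plan is to verify reducibility of each listed configuration by exhibiting, for each one, an ordering $v_1,\dots,v_k$ of its vertices (with $k \le 9$) that satisfies hypothesis (1) of Lemma~\ref{lem:Basic Lemma}, together with at least one index $i$ for which $\deg_{G_i}(v_i) \le 3$, i.e.\ hypothesis (2). Once such an ordering is found, Lemma~\ref{lem:Basic Lemma} immediately gives that every $L$-coloring of $G - V(C)$ extends to $G$ in at least two ways, which is exactly what reducibility demands. So the whole lemma reduces to a finite, configuration-by-configuration bookkeeping task, and the work is to organize it cleanly rather than to introduce any new idea.

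Concretely, for a configuration $(C,\delta)$ I would order its vertices so that the ``last'' vertices removed are the low-degree ones whose full neighborhood in $G$ is understood. The key quantitative point is this: if $v \in V(C)$ has $\delta(v) = d$ and exactly $t$ of its neighbors in $G$ lie outside $V(C)$ — equivalently, $d - t$ of its $C$-neighbors — then after deleting all the later $v_j$'s, its degree in $G_i$ is at most $d$ minus the number of its neighbors among $v_{i+1},\dots,v_k$. The strategy for each configuration is therefore: (a) pick the vertex playing the role of ``$v_k$'' to be an interior low-$\delta$ vertex of the configuration, so that deleting it frees up degree for its neighbors; (b) continue peeling off vertices in an order that keeps every $\deg_{G_i}(v_i) \le 5$; and (c) make sure that along the way some $v_i$ ends up with $\deg_{G_i}(v_i) \le 3$ — typically the very last vertex removed, or one whose two $C$-neighbors are removed before it while it has only $\delta(v)-2 \le 3$ outside-neighbors. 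For the degree-$4$ vertices that occur as the ``central'' vertex in many of the $Q_j$'s this is automatic once two of its four neighbors are gone; for degree-$5$ central vertices one needs to delete two neighbors first, which the facial structure (condition (ii) in the definition of ``contains'') guarantees are present. The $Q_{23}'$ case is handled the same way: deleting the pendant $4$-vertex from $Q_{23}$ leaves a configuration of order $\le 9$ on which the same peeling argument applies, and the pendant vertex itself, having degree $4$ with three neighbors outside, drops to degree $\le 3$ as soon as its unique $C$-neighbor is removed last.

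The main obstacle I anticipate is purely combinatorial care rather than conceptual difficulty: one must check, for each of the roughly twenty configurations, that a single global ordering works — that no intermediate vertex $v_i$ is forced to have $\deg_{G_i}(v_i) = 6$ or more. This can fail if a vertex of large $\delta$ sits in the middle of the order with few of its neighbors yet deleted; the fix is to always remove such a vertex early (or never, if it has too high degree — but the configurations are designed so that every vertex one needs to recolor has $\delta \le 9$, and in fact the reducible ones are chosen so that a good ordering exists). I would present the verification as a short table or a terse paragraph per configuration, citing Fig.~\ref{fig:1} for the degree data, and invoke Lemma~\ref{lem:Basic Lemma} at the end of each. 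The configurations deliberately omitted from this lemma — namely $Q_6$, $Q_7$, $Q_{14}$, $Q_{23}$ itself — are presumably the ones for which no such naive ordering exists and which must be handled by a separate, more delicate argument (hence their absence here), so I would not attempt to force them into this scheme.
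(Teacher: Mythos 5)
Your proposal matches the paper's proof exactly: the paper likewise reduces each of the listed configurations (and $Q'_{23}$) to Lemma~\ref{lem:Basic Lemma} by exhibiting a suitable vertex ordering $v_1,\dots,v_k$ satisfying hypotheses (1) and (2), with the orderings displayed in Figure~\ref{fig:2} and condition (2) always holding at $v_1$ (the ``last vertex peeled,'' in your phrasing). You also correctly predict that $Q_6$, $Q_7$, $Q_{14}$, and $Q_{23}$ itself are the configurations set aside for separate, more delicate arguments.
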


\begin{proof}
For these configurations $Q_i$ and $Q'_{23}$ we simply apply Lemma
\ref{lem:Basic Lemma}. The corresponding enumeration $v_1,\dots,
v_k$ ($k=|V(Q_i)|$ or $k=|V(Q'_{23})|$) is shown in Figure
\ref{fig:2}. The vertex for which condition (2) of Lemma
\ref{lem:Basic Lemma} applies is always $v_1$; it is shown by a
larger circle.
\end{proof}

\begin{figure}[t!]
   \centering
   \includegraphics[width=11cm]{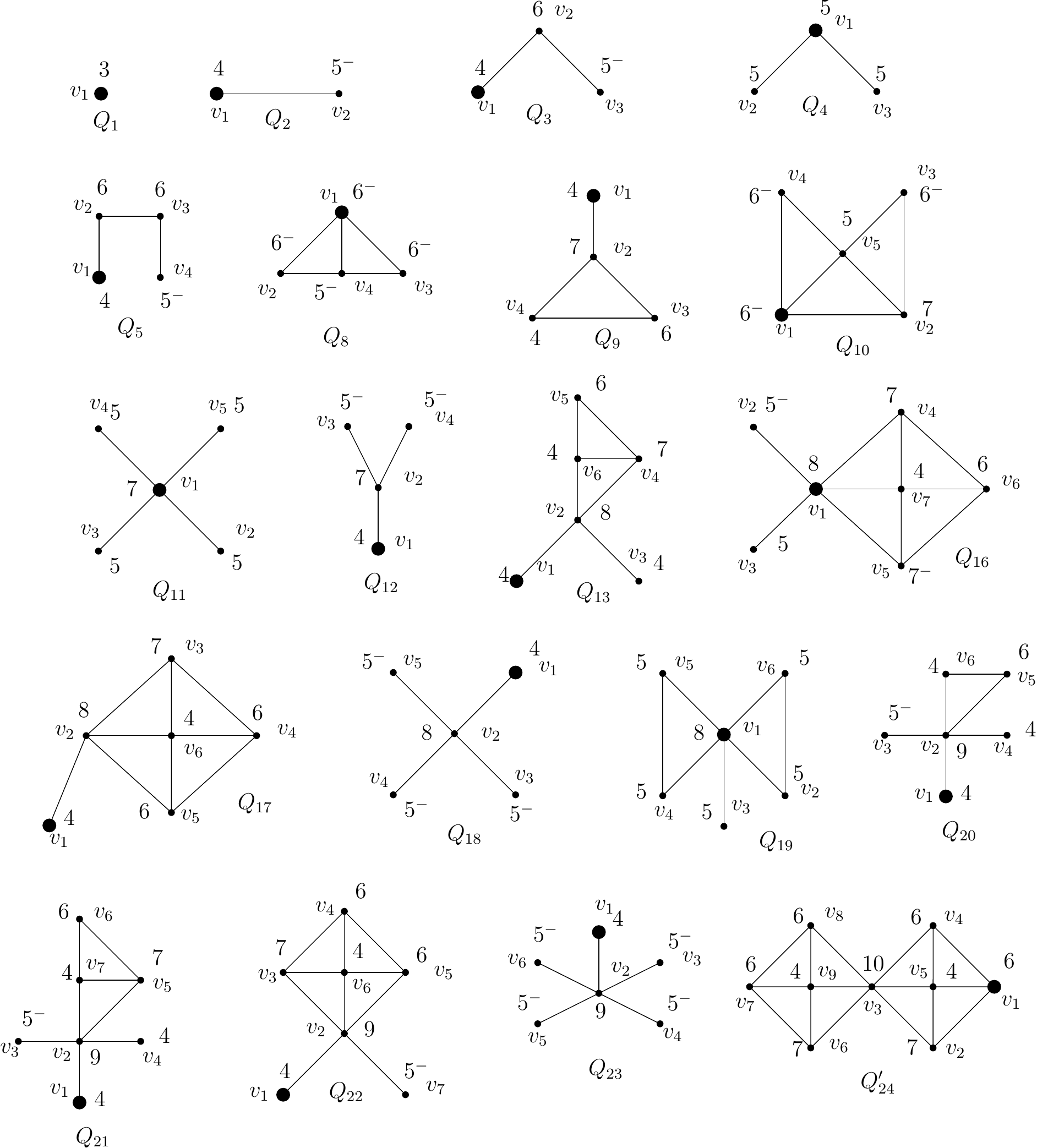}
   \caption{Lemma \ref{lem:Basic Lemma} applies to several configurations.}
   \label{fig:2}
\end{figure}

\begin{lemma}
Configuration $Q_6$ in Fig.~\ref{fig:1} is reducible.
\end{lemma}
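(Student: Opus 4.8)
The plan is to show that any $L$-coloring of $G - V(Q_6)$ extends to $G$ in at least two ways, where $Q_6$ is the configuration consisting of a $4$-vertex $v$ whose four neighbors have degrees $8^{+}, 7^{+}, 7^{+}, 7^{+}$. Unlike the configurations handled in Lemma~\ref{lem:C1}, a naive application of Lemma~\ref{lem:Basic Lemma} fails here: when we delete only $V(Q_6)$ and then try to recolor, $v$ itself has degree $4 \le 5$ (good for extension, and in fact $\le 3$ would be needed for the ``two ways'' clause, which we do not have directly), while the high-degree neighbors have far too many edges back into $G - V(Q_6)$ to be recolorable by the degree bound alone. So the idea is instead to exploit the \emph{low degree of $v$} together with the structure of how $v$ sits inside a triangulation.

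First I would set $C = Q_6$ with neighbors $u_1$ (the $8^{+}$-vertex) and $u_2, u_3, u_4$ (the $7^{+}$-vertices), where since $G$ is a triangulation the four vertices $u_1,\dots,u_4$ form a $4$-cycle around $v$ (say in the cyclic order $u_1 u_2 u_3 u_4$). Take an arbitrary $L$-coloring $f$ of $G - V(Q_6) = G - \{v\}$. We must color $v$. If there is a color $c \in L(v)$ appearing at most once on $\{u_1,u_2,u_3,u_4\}$ under $f$, then coloring $v$ with $c$ keeps every color class a forest (a vertex of degree $4$ with a color used at most once among its neighbors creates no cycle through $v$), and if \emph{two} colors of $L(v)$ each appear at most once, we get two extensions and are done. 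The only bad case is when exactly one color, say color $1$, appears at most once (in fact zero or one time) and the other two colors of $L(v)$, say $2$ and $3$, each appear on at least two of the $u_i$'s — forcing, up to symmetry, color $2$ on exactly two $u_i$'s and color $3$ on the other two, with color $1$ on none of them. In that single residual case color $v$ with $1$; this is one valid extension. To manufacture a second one, I would uncolor one carefully chosen neighbor $u_j$ and recolor it: because $u_j$ has degree $7^{+}$ this does not follow from the crude degree-$5$ bound, so instead one recolors $v$ together with $u_j$ simultaneously. Concretely, pick $u_j$ among the two neighbors colored $2$; recolor $v$ with $2$; now $u_j$ may lie on a $2$-colored cycle through $v$, so we must change $f(u_j)$. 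The vertex $u_j$ still sees all of $G - \{v, u_j\}$, so we instead recurse: delete $u_j$ as well and recolor it last, using that in $G - \{v\}$ after setting $f(v) = 2$ there is a good color for $u_j$ — here we need $u_j$ to have a free color, which follows because $u_j$ is the only uncolored vertex and a forest constraint on a single added vertex is satisfiable whenever some color class stays acyclic; if it is satisfiable in two ways we are again done, and if in exactly one way we still have produced a coloring of $G$ genuinely different from the first (they disagree at $v$).

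The cleanest way to package this is: apply Lemma~\ref{lem:Basic Lemma} twice with two different orderings of a two-element deletion set. Take $k = 2$ with $\{v_1, v_2\}$ chosen so that $v_2 = v$ and $v_1$ is a neighbor $u_j$ of $v$ that will have low degree once $v$ is removed — but in a triangulation, removing $v$ drops $\deg(u_j)$ by only $1$, which is not enough. Hence the actual mechanism must be the direct argument above rather than a black-box invocation. So the real content is the residual-case analysis: exactly one color of $L(v)$ avoids two neighbors, the other two colors each occupy two of the four neighbors, and then we produce the second coloring by the simultaneous recoloring of $v$ and one neighbor.

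\textbf{Main obstacle.} The crux is exactly that last step: showing that after recoloring $v$ with a color $c$ that \emph{is} used twice on $N(v)$, we can repair the (at most two) neighbors colored $c$ without cascading. I expect this to require using more of the local structure around $v$ than just the degrees in the list $Q_6$ — in particular that $u_1, u_2, u_3, u_4$ form a $4$-cycle, so a monochromatic cycle through $v$ in color $c$ must pass through the two $c$-colored $u_i$'s consecutively, and one controls it by checking the lists of those two vertices. If the lists cooperate (some $u_i$ has a fourth color available, or the two $c$-vertices are non-adjacent so only one needs changing) the repair is immediate; the genuinely tight sub-case, where both $c$-colored neighbors are forced and adjacent, is where one must invoke the exclusion of neighboring configurations or a short ad hoc parity/list argument to finish. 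That is the step I would spend the most care on.
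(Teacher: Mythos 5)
Your proof rests on two misidentifications that make the argument fail before it gets off the ground. First, you have the wrong configuration: $Q_6$ is not ``a $4$-vertex whose four neighbors have degrees $8^{+}, 7^{+}, 7^{+}, 7^{+}$.'' That case is \emph{not} excluded --- the discharging rules R2 and R6 handle it (they already give the $4$-vertex a final charge of $0$). The configuration $Q_6$ that must be shown reducible is a $4$-vertex whose cyclically ordered neighbors have degrees $7, 7, 7^{-}, 6$, as one can read off both from the unavoidability argument (``if all neighbors have degree at most $7$, then they all have degree exactly $7$ since $Q_6$, $Q_7$ and $Q_8$ are excluded'') and from the paper's reducibility proof. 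Second, and more fatally, you take $V(Q_6) = \{v\}$, i.e.\ you start from an $L$-coloring of $G - v$ alone. But a configuration here is a plane graph together with a degree prescription, and $Q_6$ has five vertices: the center $4$-vertex $u$ and its four neighbors $u_1,\dots,u_4$ forming a $4$-cycle. The reducibility hypothesis hands you an $L$-coloring of $G - \{u,u_1,u_2,u_3,u_4\}$, and you must produce two extensions coloring all five.

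This second error is not cosmetic: your entire mechanism --- ``if only one color works at $v$, uncolor a neighbor $u_j$ and recolor it together with $v$'' --- is illegal under the definition of reducibility, because the coloring of every vertex outside $V(C)$ is fixed by hypothesis. You are only permitted to choose colors on $V(C)$. If you wanted to make $\{v, u_j\}$ itself the reducible configuration, you would need $\deg_{G-v}(u_j) \le 5$ to invoke Lemma~\ref{lem:Basic Lemma}, but $u_j$ has degree $\ge 6$ in $G - v$, so that fails too --- you acknowledge this yourself. By contrast, the extra room in the correct framework is exactly what the paper's proof uses: once all four neighbors are also deleted, a $7$-vertex like $u_2$ has only \emph{four} colored neighbors in $G - V(Q_6)$, so either $u_2$ has two admissible colors (and Lemma~\ref{lem:Basic Lemma} finishes via the ordering $u_1, u_3, u_4, u$), or $u_2$ is forced and then the interaction of the forced colors on $u_1, u_3$ and the degree-$6$ vertex $u_4$ --- especially that the $4$-cycle $u_1u_2u_3u_4$ around $u$ forces any monochromatic cycle through $u$ to use two consecutive $u_i$'s --- produces a second extension. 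Your intuition about the $4$-cycle structure is the right ingredient, but it must be deployed while coloring all five configuration vertices, not while trying to recolor a vertex you are not allowed to touch.
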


\begin{proof}
Let $u$ be the 4-vertex and let $u_1, u_2, u_3, u_4$ be its
neighbors in cyclic order and let $C$ be the cycle $u_1u_2u_3u_4$.
Suppose that $\deg(u_1) = \deg(u_2) = 7$, $\deg(u_3) \le 7$ and
$\deg(u_4) = 6$. Let $f$ be an $L$-coloring of \/ $G - \{u, u_1,
u_2, u_3, u_4 \}$. Now, consider $u_2$. If there are at least two
ways to extend the coloring $f$ to $u_2$, then we can obtain at
least two different colorings for $G$ by sequentially coloring
$u_1, u_3, u_4, u$ using Lemma \ref{lem:Basic Lemma}. Therefore,
we may assume that $L(u_2) = \{1,2,3\}$ and that colors 1 and 2
each appear exactly twice on $N(u_2)$. Now, let us color $u_2$
with color $3$. We now consider coloring $u_1$ and $u_3$. We claim
that at least one of $u_1$ and $u_3$ must be forced to be colored
3. Otherwise, we color $u_1$ and $u_3$ without using color 3, then
we color $u_4$ arbitrarily (this is possible since $u$ is yet
uncolored). Now, if $3 \in L(u)$, then we can color $u$ with $3$
since $u_2$ has no neighbor of color 3 and hence it is not
possible to make a cycle colored 3. Moreover, there is at most one
color (other than color 3) that can appear on the neighborhood of
$u$ twice. Therefore, $u$ has another available color in its list
and so there are two ways to color $u$. Similarly, we get two
different colorings of $u$ when $3 \notin L(u)$. This proves the
claim, and we may assume that $L(u_1) = \{a, b, 3\}$, $u_1$ is
forced to be colored 3, and that the four colored neighbors of
$u_1$ not on $C$ have colors $a, a, b, b$. Now, we color $u_3$
arbitrarily with a color $c$. We may assume that $c \neq 3$, for
otherwise we color $u_4$ arbitrarily and we will have two
available colors for $u$. To complete the proof it is sufficient
to show that $u_4$ can be colored with a color that is not $c$,
for then we could color $u$ with at least two different colors. If
$u_4$ is forced to be colored $c$, then for every color $x \in
L(u_4)$, $x \neq c$, the color $x$ must appear at least twice on
$N(u_4)$. This implies that the three colored neighbors of $u_4$
not on the cycle have colors $3, y, y$, for some color $y$ and
that $3,y \in L(u_4)$. But recall that $u_1$ and $u_2$ have no
neighbors outside $C$ having color $3$. Therefore, coloring $u_4$
with color 3 gives a proper coloring of $G-u$. Now, $u$ can be
colored with at least two colors to obtain a coloring of $G$.
\end{proof}

\begin{lemma}
\label{lem:4 8-6-7-6} \label{lem:C2} Let $u$ be a 4-vertex, and
suppose $u_1, u_2, u_3, u_4$ are the neighbors of $u$ in cyclic
order. Suppose that $\deg(u_1) \leq 6$, $\deg(u_2) \leq 7$ and
$\deg(u_3) \leq 6$. This configuration is reducible. In
particular, the configuration $Q_7$ in Fig.~\ref{fig:1} is
reducible.
\end{lemma}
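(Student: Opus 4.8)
The plan is to imitate the argument for $Q_6$, with one change dictated by the fact that $\deg u_4$ may be arbitrarily large: the configuration we delete is $C:=\{u,u_1,u_2,u_3\}$ (so $|C|=4\le 9$), while $u_4$ is \emph{kept}, so that $f(u_4)$ is fixed once and for all. Since $G$ may be assumed to be a triangulation, $u_1u_2,u_2u_3,u_3u_4,u_4u_1$ are edges, $u_1\not\sim u_3$, $u_2\not\sim u_4$, and $u$ is adjacent to each of $u_1,u_2,u_3,u_4$; the argument below uses nothing about $\deg u_4$, so it covers $Q_7$ in particular. Fix an $L$-coloring $f$ of $G-C$; we must extend $f$ to $G$ in at least two ways. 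The easy cases go through Lemma~\ref{lem:Basic Lemma}. If $\deg u_2\le6$, use the enumeration $(v_1,v_2,v_3,v_4)=(u_2,u_1,u_3,u)$: the reduced degrees are $\deg u_2-3\le 3$, $\deg u_1-1\le 5$, $\deg u_3-1\le 5$, and $4$, so both conditions of Lemma~\ref{lem:Basic Lemma} hold. If $\min\{\deg u_1,\deg u_3\}\le 5$, use $(u_1,u_2,u_3,u)$ (or its mirror $(u_3,u_2,u_1,u)$): the reduced degrees are $\deg u_1-2\le 3$, $\deg u_2-2\le 5$, $\deg u_3-1\le 5$, and $4$. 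Hence we may assume $\deg u_2=7$ and $\deg u_1=\deg u_3=6$; then each of $u_1,u_2,u_3$ has exactly four neighbors outside $C$, all colored by $f$, and $u_4$ is one of the four external neighbors of $u_1$, and of $u_3$, but not of $u_2$.

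Look first at $u_2$ in $G-\{u,u_1,u_3\}$, where it sees only its four external neighbors. If $f$ extends to $u_2$ in two ways, then applying Lemma~\ref{lem:Basic Lemma} to $(v_1,v_2,v_3)=(u_1,u_3,u)$, whose reduced degrees are $5,5,4$, shows that each of the two colorings of $G-\{u,u_1,u_3\}$ extends to $G$, so $G$ has two $L$-colorings. So assume $u_2$ has a unique available color; exactly as in the $Q_6$ proof this forces $L(u_2)=\{1,2,3\}$ with the four external neighbors of $u_2$ colored $1,1,2,2$, color $3$ not appearing among them. Color $u_2$ with $3$; it now has no neighbor colored $3$, hence is an isolated vertex of the color-$3$ class. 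Now consider $u_1$ and $u_3$; each sees its four external neighbors together with $u_2$ (recall $u_1\not\sim u_3$). If both $u_1$ and $u_3$ have an available color different from $3$, color them with such colors; then the neighbors of $u$ carry colors $\neq3$ (on $u_1,u_3$), $3$ (on $u_2$), and $f(u_4)$. Color $3$ is available for $u$, since its only neighbors colored $3$ are $u_2$ and possibly $u_4$, and $u_2$, being isolated in the color-$3$ class, lies in a different color-$3$ component than $u_4$, so no color-$3$ cycle is created; moreover at most one color is repeated among $u_1,u_3,u_4$, so $L(u)$ contains a second color available for $u$. This gives two $L$-colorings of $G$.

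Hence, using the symmetry between $u_1$ and $u_3$, we may assume $u_1$ is forced to color $3$. As above this forces $L(u_1)=\{a,b,3\}$ with the four external neighbors of $u_1$ colored $a,a,b,b$ and color $3$ absent; since $u_4$ is one of these, $f(u_4)\in\{a,b\}$, so $f(u_4)\neq3$. Color $u_1$ with $3$. Then $u_1u_2$ is an edge whose ends are both colored $3$, and neither $u_1$ nor $u_2$ has any other color-$3$ neighbor, so $\{u_1,u_2\}$ is a whole component of the color-$3$ class. It remains to color $u_3$, then $u$. I claim $u_3$ has an available color $c\neq f(u_4)$. If not, $u_3$ is forced to color $y:=f(u_4)$, which is one of the colors on its four external neighbors. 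The two colors of $L(u_3)\setminus\{y\}$ must then each appear at least twice among those four external neighbors: this is immediate for a color $\neq3$, because the fifth present neighbor $u_2$ is colored $3$; and it also holds for color $3$, because otherwise coloring $u_3$ with $3$ would give it only the color-$3$ neighbors $u_2$ and one external vertex $w$, with $w\notin\{u_1,u_2\}$, so no color-$3$ cycle would arise and $3$ would be available, a contradiction. But then both colors of $L(u_3)\setminus\{y\}$ appear at least twice among the four external neighbors, which is impossible since $u_4$ is one of them and is colored $y$. So color $u_3$ with an available color $c\neq f(u_4)$.

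Finally color $u$: its four neighbors carry colors $3$ ($u_1$), $3$ ($u_2$), $c$ ($u_3$), and $f(u_4)$ ($u_4$). Color $3$ is not available for $u$, since $u_1u_2$ is an edge with both ends colored $3$ and coloring $u$ with $3$ would create the triangle $uu_1u_2$ in the color-$3$ class; and since $f(u_4)\neq3$ and $c\neq f(u_4)$, every color of $L(u)$ other than $3$ occurs at most once among the colors of $u_1,u_2,u_3,u_4$, hence is available for $u$. Thus $u$ has at least two available colors, yielding two $L$-colorings of $G$. The crux throughout is controlling the color-$3$ class so that putting $u_3$ or $u$ into it provably creates no monochromatic cycle; the mechanism is that forcing first $u_2$ and then $u_1$ to color $3$ pins down their external neighborhoods to avoid color $3$, so the color-$3$ component meeting $\{u_1,u_2\}$ stays a single edge. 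The one feature not present in the $Q_6$ argument is that $u_4$ is never deleted, so $f(u_4)$ is fixed in advance and the two escape colors for $u$ must be chosen to avoid both $3$ and $f(u_4)$.
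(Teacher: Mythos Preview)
Your proof is correct and takes a genuinely different route from the paper's.

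\textbf{The paper's approach.} The paper also deletes $C=\{u,u_1,u_2,u_3\}$ and names $f(u_4)=3$, but then extends \emph{linearly} in the order $u_1,u_2,u_3,u$. Assuming a unique extension, it first colors $u_1$ with a color different from $f(u_4)$ (say~$2$), so that $u_1$ has no colored $2$-neighbor. The key observation is a ``non-restriction chain'': the color just placed at $u_1$ cannot help block any color at $u_2$, so if $u_2$ is forced, two colors of $L(u_2)$ each appear twice on $N(u_2)\setminus\{u_1\}$ and $u_2$ gets the leftover color $x$; likewise $x$ at $u_2$ cannot help block any color at $u_3$, forcing the external pattern $3,c,c$ at $u_3$ and color $y$ there. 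One then reads off $\{x,y\}=\{2,3\}$, hence $x=3$, and concludes that color~$3$ is still available at $u$ alongside a second color, a contradiction.

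\textbf{Your approach.} You instead color the ``center'' $u_2$ first and then treat $u_1,u_3$ symmetrically before $u$, a direct transplant of the $Q_6$ argument. This forces you to reduce at the outset to the extremal degrees $(\deg u_1,\deg u_2,\deg u_3)=(6,7,6)$ via Lemma~\ref{lem:Basic Lemma}, but once there the structure is pleasant: $u_2$ becomes isolated in color~$3$, and if $u_1$ is forced to~$3$ you immediately learn $f(u_4)\neq 3$, after which your ``$u_3$ can avoid $f(u_4)$'' claim finishes things off.

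\textbf{Comparison.} The paper's linear chain is slicker in that it never splits on the exact degrees and never argues about $u_1$ and $u_3$ simultaneously; the single idea ``the color I just placed cannot restrict the next vertex'' does all the work. Your version is conceptually closer to $Q_6$ and makes the role of $u_4$ (the undeleted neighbor) very transparent; the extra case reduction and the separate $u_3$-claim are the price. One small point worth making explicit in your write-up: when you say ``if both $u_1$ and $u_3$ have an available color different from~$3$, color them with such colors'', you are using that coloring $u_1$ with some $d_1\neq 3$ cannot destroy every non-$3$ option at $u_3$ (since $u_1\not\sim u_3$, only the $d_1$-class is affected, and you may pick $d_1$ to avoid a clash). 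This is exactly the liberty the paper's $Q_6$ proof also takes, so your argument is at the same level of rigor.
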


\begin{proof}
Let $f$ be an $L$-coloring of $G'=G- \{u, u_1, u_2, u_3\}$.
Suppose that $f(u_4) = 3$. Now, consider $u_1$. Note that we can
extend the coloring of $G'$ to $u_1, u_2, u_3, u$ (in this order)
by Lemma \ref{lem:Basic Lemma}. Suppose, for a contradiction, that
$f$ has only one extension to an $L$-coloring of $G$. Then colors
of each of $u_1, u_2, u_3, u$ are uniquely determined in each step
and two colors from each vertex list are forbidden. Now, consider
$u_1$. Since only four of its neighbors are colored and $f(u_4)
=3$, we can color $u_1$ with a color other than 3, say 2, and we
may further assume that its colored neighbors use colors 1 and 3
twice, where $L(u_1) = \{1, 2, 3\}$. Now, consider coloring $u_2$.
The color 2 at $u_1$ cannot create a monochromatic cycle
containing $u_2$. Thus, the only way for a color of $u_2$ to be
forced is that $L(u_2) = \{a, b, x\}$ and colors $a$ and $b$ each
appear twice on $N(u_2) \backslash \{u_1\}$. In this case, we
color $u_2$ with the color $x$. Similarly, $x$ does not give any
restriction for a color at $u_3$, so $u_3$ satisfies $L(u_3) =
\{3, c, y\}$ and the three neighbors of $u_3$ distinct from $u_4$
are colored with colors $3, c, c$. Now, if $u$ does not have two
colors on $N(u)$, each appearing twice, we have two different
available colors in $L(u)$. Therefore, we may assume that $\{x,
y\}= \{2, 3\}$ and that $2,3 \in L(u)$. Since $L(u_3) = \{3, c,
y\}$, it follows that $y = 2$ and $x=3$. Now we see that coloring
$u$ with color 3 does not create a monochromatic cycle, so $u$ has
two available colors: color 3 and $z \in L(u) \backslash \{2,3\}$.
%
%
\end{proof}

\begin{lemma}
\label{lem:8 4-4-5 adj. 6} \label{lem:C4} 
The configuration $Q_{14}$ is reducible.
\end{lemma}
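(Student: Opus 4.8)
The plan is to follow the pattern of the proofs of the two preceding lemmas (the ones for $Q_6$ and $Q_7$). Write $v$ for the $8$-vertex of $Q_{14}$, let $v_1,v_2$ be its two $4$-neighbors and $v_3$ its $5$-neighbor, and let $w$ be the $6$-vertex of the configuration, a common neighbor of $v$ and $v_1$. Let $C=V(Q_{14})$, so $|C|\le 9$; it suffices to show that every $L$-coloring $f$ of $G-C$ extends to $G$ in at least two ways. I would reintroduce the vertices of $C$ in the order $v$, then $w$, then $v_1,v_2,v_3$, and then the remaining vertices of $C$ (if any) in an order making the degrees work. When $v$ is reintroduced, at least three of its neighbors ($v_1,v_2,v_3$) are still absent, so $v$ has degree at most $5$ at that step; when $w$ is reintroduced, $v_1$ is still absent, so $w$ has degree at most $5$; and each of $v_1,v_2$ has degree at most $4$ and $v_3$ at most $5$ when reintroduced, since they keep the already-reintroduced vertex $v$. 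Thus condition~(1) of Lemma~\ref{lem:Basic Lemma} holds along this order, so $f$ does extend; and if at some step in this greedy extension a vertex has two admissible colors, then finishing greedily gives two extensions and we are done. So we may assume the extension is unique. (Note that, unlike in Lemma~\ref{lem:C1}, condition~(2) of Lemma~\ref{lem:Basic Lemma} need not hold here, which is why $Q_{14}$ requires its own argument.)

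Uniqueness forces each of $v,w,v_1,v_2,v_3$ to a single color at its step, so, exactly as in the $Q_6$ and $Q_7$ proofs, for each such vertex $x$ the two colors of $L(x)$ other than the one $x$ receives each occur at least twice among the already-colored neighbors of $x$, with two of these neighbors lying in a common monochromatic component of the current graph minus $x$ (otherwise that color would be admissible for $x$). Because $v_1,v_2$ have degree $4$, $v_3$ has degree $5$, and each of them lies on facial triangles of $G$ through $v$, the color pattern around these small vertices is extremely constrained: in particular the colors of $v$ and $w$, both neighbors of $v_1$, together with the colors assigned by $f$, determine which colors are blocked at $v_1$ and which monochromatic paths block them. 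The endgame is then to exhibit, for every pattern of forced colors, a vertex $x\in\{v_1,v_2,v_3\}$ and a color $c\in L(x)$ other than the forced one such that recoloring $x$ with $c$ creates no monochromatic cycle. Any such cycle would be a monochromatic cycle through $x$, hence would use two neighbors of $x$ joined by a monochromatic $c$-path avoiding $x$; since $x$ has only $4$ or $5$ neighbors and their colors are known, this is checked directly. A valid such recoloring contradicts that $x$ was forced and yields the required second extension of $f$.

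The hard part is precisely this verification: one must rule out a monochromatic cycle through the high-degree vertices $v$ (degree $8$) or $w$ (degree $6$), whose full neighborhoods we do not control, since a monochromatic $c$-path between two neighbors of $x$ could a priori run through $v$ or $w$. For instance, if $v$ and $w$ both already carry the color $c$ we would like to reuse at $v_1$, the facial triangle $v_1vw$ blocks the move, and one must instead recolor $v_2$ or $v_3$, or modify the color of $v$ itself, using that three of $v$'s eight neighbors are exactly $v_1,v_2,v_3$, whose colors one may choose so as to make a second color available at $v$. I expect to organize the verification as a finite case analysis on (i) the multiset of colors $f$ assigns to the neighbors of $v$ outside $C$, (ii) whether the forced colors of $v$ and $w$ coincide, and (iii) the forced color of each of $v_1,v_2,v_3$, dispatching each branch by producing an explicit alternative color for one small-degree vertex, just as in the two preceding lemmas.
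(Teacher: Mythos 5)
Your proposal sets up a sensible reduction order (reintroduce the $8$-vertex $v$ first, then the $6$-vertex $w$, then the small-degree neighbors) and correctly identifies the central difficulty --- namely, that when you try to show a second extension exists, you must rule out monochromatic cycles passing through the high-degree vertices $v$ and $w$, whose full neighborhoods you do not see. However, the proposal does not actually close the argument: it ends by announcing ``a finite case analysis on $\dots$'' without executing it, and that is precisely where the work lies. As written, this is a plan, not a proof, and it also contains a non-trivial risk: you cannot in general ``recolor'' a vertex $x$ with an alternative color and simply check cycles locally, because the two other small vertices of the configuration may have been colored \emph{after} $x$ in a way that depends on $x$'s forced color, so the recoloring must be done in a specific order (as in the paper's proof) rather than by perturbing an already-completed unique extension.

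You have also missed a structural case that the paper handles first: when the second $4$-neighbor $u_i$ sits at cyclic distance two from $u_j$ around the $8$-vertex (so that $u_i$ is adjacent to the $6$-vertex $u_l=u_{j+1}$), the paper applies Lemma~\ref{lem:Basic Lemma} directly with condition~(2), using the order $u_i,u,u_{j+1},u_j,u_k$. Your uniform order would give $u_i$ one extra colored neighbor at the step where you count its available colors, breaking the degree bound you rely on.

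Finally, the paper's argument in the main case avoids exhaustive casing by a targeted construction that your plan does not contain: assume WLOG that color $3\in L(u)$ is absent from $N(u)$ under $f$ (otherwise $u$ already has two choices), color $u$ with $3$, then color the $6$-vertex $u_{j+1}$ so that it either avoids color $3$ or, if it takes color $3$, has no other $3$-colored neighbor. After this, the other $4$-vertex $u_i$ (not adjacent to $u_{j+1}$) can always be safely colored $3$, and since only three of its four neighbors are currently colored, $u_i$ has at least two admissible colors; the remaining two vertices are then colored greedily via Lemma~\ref{lem:Basic Lemma}. That single carefully chosen color at $u$, propagated through $u_{j+1}$, replaces the case analysis you were planning. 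To turn your proposal into a proof you would need either to supply the full casework with the above subtleties resolved, or to adopt this more direct construction.
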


\begin{proof}

Let $u$ be an 8-vertex and assume its neighbors (in the clockwise
cyclic order) are $u_1,\dots,u_8$ and let $C$ be the 8-cycle
$u_1u_2\dots u_8u_1$. Suppose that $\deg(u_i) = \deg(u_j) = 4$,
$\deg(u_k) \leq 5$ and $\deg(u_l)=6$, where $i,j,k,l \in
\{1,\dots, 8\}$ and $i \neq j$. Assume that $u_l$ and $u_j$ are
adjacent on $C$. We may assume that $u_l = u_{j+1}$. If $u_i =
u_{j+2}$, then we can use Lemma \ref{lem:Basic Lemma} (with $v_1 =
u_i, v_2 = u, v_3 = u_{j+1}, v_4 = u_j, v_5 = u_k$), where
property (2) applies for $v_1$.

Therefore, we may assume that $u_i \neq u_{j+2}$. Let $L(u) =
\{1,2,3\}$ and consider an $L$-coloring $f$ of $G - \{u, u_i, u_j,
u_k, u_l\}$. Without loss of generality, we may assume that colors
1 and 2 each appear exactly twice on $N(u)$ in the coloring $f$.
Otherwise, there are two ways to extend the coloring $f$ of $G -
\{u, u_i, u_j, u_k, u_l\}$ to a coloring of $G - \{u_i, u_j, u_k,
u_l\}$, and applying Lemma \ref{lem:Basic Lemma} we can extend
each of these to a coloring of $G$. Therefore, color 3 does not
appear in the neighborhood of $u$ in the coloring $f$. We color
$u$ with color $3$ to obtain a coloring $g$ of $G - \{u_i, u_j,
u_k, u_l\}$. Now, consider the 6-vertex $u_{j+1}$. Since $u_{j+1}$
has at most five colored neighbors so far, we have at least one
available color for it from its list. If $3 \notin L(u_{j+1})$ we
color $u_{j+1}$ arbitrarily with an available color. If $3 \in
L(u_{j+1})$, we color $u_{j+1}$ with $3$ if color 3 does not
appear on $N(u_{j+1})\backslash \{u\}$. If color 3 appears on
$N(u_{j+1})\backslash \{u\}$, we color $u_{j+1}$ with any other
available color from its list except 3 (this is possible since the
remaining three colored neighbors of $u_{j+1}$ can forbid only one
additional color from $L(u_{j+1})$). Now, consider $u_i$. We know
that $u_i \neq u_{j+2}$. First, assume that $3 \notin L(u_i)$.
Since $u_i$ has only three colored neighbors and $u$ is colored 3,
there are at least two available colors in $L(u_i)$ that can be
used to color $u_i$. Each coloring then can be extended to a
coloring of $G$ by Lemma \ref{lem:Basic Lemma}. Therefore, we may
assume that $3 \in L(u_i)$. Recall that no neighbor of $u$, except
possibly $u_{j+1}$, is colored 3, and if so, then $u_{j+1}$ has no
neighbor besides $u$ of color 3. Therefore, $u_i$ can be colored
with color 3 without creating a monochromatic cycle of color 3.
Consequently, the four colored neighbors of $u_i$ can forbid at
most one color from $L(u_i)$, which implies that we can color
$u_i$ with two different colors. Now, applying Lemma
\ref{lem:Basic Lemma} to $G-\{u_k,u_j\}$, we see that each of
these two colorings can be extended to a coloring of $G$.
\end{proof}

\section{Proof of the main theorem}

It is easy to see that every plane graph is a spanning subgraph of
a triangulation; we can always add edges joining distinct
nonadjacent vertices until we obtain a triangulation. However,
graphs in the projective plane no longer satisfy this property.
The following extension will be sufficient for our purpose.

\begin{proposition}
\label{prop:pp} Let $G$ be a graph embeddable in the projective
plane. Then one of the following holds:
\begin{itemize}
\item[\rm (a)] $G$ is a spanning subgraph of a triangulation of
the plane or the projective plane.
\item[\rm (b)] $G$ contains
vertices $u,v$ of degree at most\/ $3$ such that the graph $G-u-v$
is planar.
\item[\rm (c)] $G$ contains adjacent vertices $u,v$ of
degree at most\/ $4$ such that the graph $G-u-v$ is planar.
\end{itemize}
\end{proposition}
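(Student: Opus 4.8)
The plan is to fix a projective embedding of $G$, saturate it with edges, and show that the only obstruction to obtaining a triangulation is a $K_{4}$ or $K_{5}$ sitting on the boundary of a non-triangular face --- which is exactly the situation that forces outcomes (b) and (c).

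If $G$ is planar, it is a spanning subgraph of a triangulation of the plane and (a) holds; so assume $G$ is non-planar and fix an embedding of $G$ in the projective plane. Repeatedly add an edge inside a face of this embedding, keeping the graph simple, until no edge can be added; call the result $G^{+}\supseteq G$, with its inherited embedding. Then $G^{+}$ is connected and $2$-connected --- two components, or the two sides of a cut vertex, would always share a face across which a new edge could be drawn --- and $G^{+}$ is non-planar because $G$ is. It suffices to verify (a), (b) or (c) for $G^{+}$: if $G^{+}$ is a spanning subgraph of a triangulation then so is $G$; a vertex of degree $\le d$ in $G^{+}$ has degree $\le d$ in $G$; $G-u-v$ is a subgraph of $G^{+}-u-v$; and if an edge $uv$ used for (c) in $G^{+}$ is absent from $G$, then $u$ and $v$ already have degree $\le 3$ in $G$, so (b) holds for $G$.

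Since $G^{+}$ is $2$-connected and non-planar, its embedding is cellular (otherwise some face would be a M\"obius band, making $G^{+}$ planar), so every face is an open disk bounded by a cycle. If every face is a triangle, $G^{+}$ is a triangulation of the projective plane and (a) holds. Otherwise, fix a non-triangular face $F$ with facial cycle $C=v_{1}\cdots v_{k}$, $k\ge 4$. By maximality of $G^{+}$, every chord $v_{i}v_{j}$ of $C$ belongs to $G^{+}$, so $G^{+}[V(C)]$ is a complete graph $K_{k}$; since $K_{m}$ is projective-planar only for $m\le 6$, we get $k\le 6$, and $k=6$ is impossible because $K_{6}$ is itself a triangulation of the projective plane, so $G^{+}\supseteq K_{6}$ would be a triangulation, contradicting the choice of $F$. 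Thus $k\in\{4,5\}$. The closure $\overline F$ is a closed disk, so the cycle $C=\partial\overline F$ is null-homotopic; hence $\overline M:=\mathbb{RP}^{2}\setminus F$ is a M\"obius band with boundary $C$, $G^{+}$ is embedded in $\overline M$, and its $k$ boundary vertices induce $K_{k}$.

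The heart of the argument is to read the exceptional structure off this embedding in the M\"obius band. First, $G^{+}-V(C)$ is planar: inside $\overline M$, the clique $K_{k}$ bounds exactly $\tfrac{k(k-3)}{2}$ regions, each a disk (for $k=4$ one sees this by cutting $\overline M$ along the two chords of $C$; for $k=5$ it follows since $K_{5}$ is non-planar, so no region of it can be a M\"obius band), and every remaining vertex of $G^{+}$ lies in one of these disks, so deleting $V(C)$ splits $G^{+}$ into the plane graphs sitting inside them. It then remains to exhibit two vertices of degree $\le 3$, or two adjacent vertices of degree $\le 4$, whose deletion leaves the graph planar; these will be found among $V(C)$ and the vertices lying just inside $\overline M$, and they are precisely the vertices that the reductions of Section~3 peel off. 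I expect this last step to be the real obstacle: one bounds, via Euler's formula inside the M\"obius band, the number of interior vertices in terms of $k\in\{4,5\}$, and then inspects the few surviving small configurations --- treating the case $G^{+}=K_{5}$ (where (c) holds with any edge of $K_{5}$) separately from the cases with interior vertices, possibly after re-choosing $G^{+}$ so that the non-triangular face has as few vertices as possible on its inner side --- to produce the low-degree vertices demanded by (b) or (c).
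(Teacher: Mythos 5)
There is a genuine gap, and you have in fact flagged it yourself: the paragraph beginning ``The heart of the argument'' is a sketch, not a proof, and the last sentence announces that the real work is still to be done. The preliminary steps are sound and run parallel to the paper's (saturate inside a fixed embedding, reduce to $2$-connected, observe that the boundary of a non-triangular face induces a clique $K_k$ with $k\in\{4,5\}$, and note that the regions cut out by $K_k$ are disks so that $G^{+}-V(C)$ is planar). But the completion you gesture at would not work as stated: you cannot bound the number of vertices interior to a face of $K_k$ by Euler's formula --- there may be arbitrarily many --- so there are no ``few surviving small configurations'' to inspect, and it is not clear how ``re-choosing $G^{+}$'' would help, since you have only licensed yourself to add edges within one fixed embedding.

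The paper closes the gap by a different, more structural route. It runs a genuine induction on the deficiency $3|V(G)|-|E(G)|-3$, so that adding \emph{any} missing edge terminates the argument, and it permits itself to \emph{re-embed} edges (not only add them) in order to create room for a new edge. Concretely: if some $v_i\in V(C)$ has neighbors off $C$ in two different faces of $K$, two chords through the crosscap are re-routed into $F$, an edge is added between two nonadjacent neighbors of $v_i$, and induction applies. Otherwise the neighbors of each $v_i$ off $C$ lie in a single face of $K$; since any two faces of $K$ share a vertex, and since a face containing a non-$C$ vertex must have every boundary vertex sending an edge into it (else one could add an edge), at most one face $F'$ of $K$ contains non-$C$ vertices. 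For $r=5$ this immediately yields (c): the two vertices of $C$ not on $\partial F'$ have degree exactly $4$, are adjacent, and their deletion leaves a graph drawn in a disk. For $r=4$, one re-embeds an edge of $\partial F'$ to enlarge the outer face to length at least $5$ and falls back to the earlier cases. None of these three ideas --- the deficiency induction, the ``at most one populated face'' observation, or the $r=4$ re-embedding step --- appears in your proposal, and without something of this kind the argument does not close.
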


\begin{proof}
If $G$ is a planar graph, then we have (a); so we may assume that
$G$ is not planar. The proof proceeds by induction on the number
$k=3|V(G)|-|E(G)|-3$. If $k=0$, then $G$ triangulates the
projective plane (cf.\ \cite[Proposition 4.4.4]{MT2001}), and we
have (a). If $G$ is not 2-connected, then we can add an edge
joining two vertices in distinct blocks of $G$ and keep the
embeddability in the projective plane, and we win by induction.
Thus we may assume that $G$ is 2-connected and non-planar. This
assures that facial walks of every embedding of $G$ are cycles of
$G$ (cf.\ \cite[Proposition 5.5.11]{MT2001}). If $G$ is not a
triangulation, then there is a facial cycle $C=v_1v_2\dots
v_rv_1$, where $r\ge 4$. If two vertices of $C$ are nonadjacent in
$G$, we can add the edge joining them and win by induction. Thus,
the subgraph $K$ of $G$ induced on $V(C)$ is the complete graph of
order $r$. Since this subgraph has a facial walk of length $r>3$,
we conclude that $r\in\{4,5\}$ and the induced embedding of $K$ is
as shown in Figure~\ref{fig:3}.

\begin{figure}[htb]
   \centering
   \includegraphics[height=4cm]{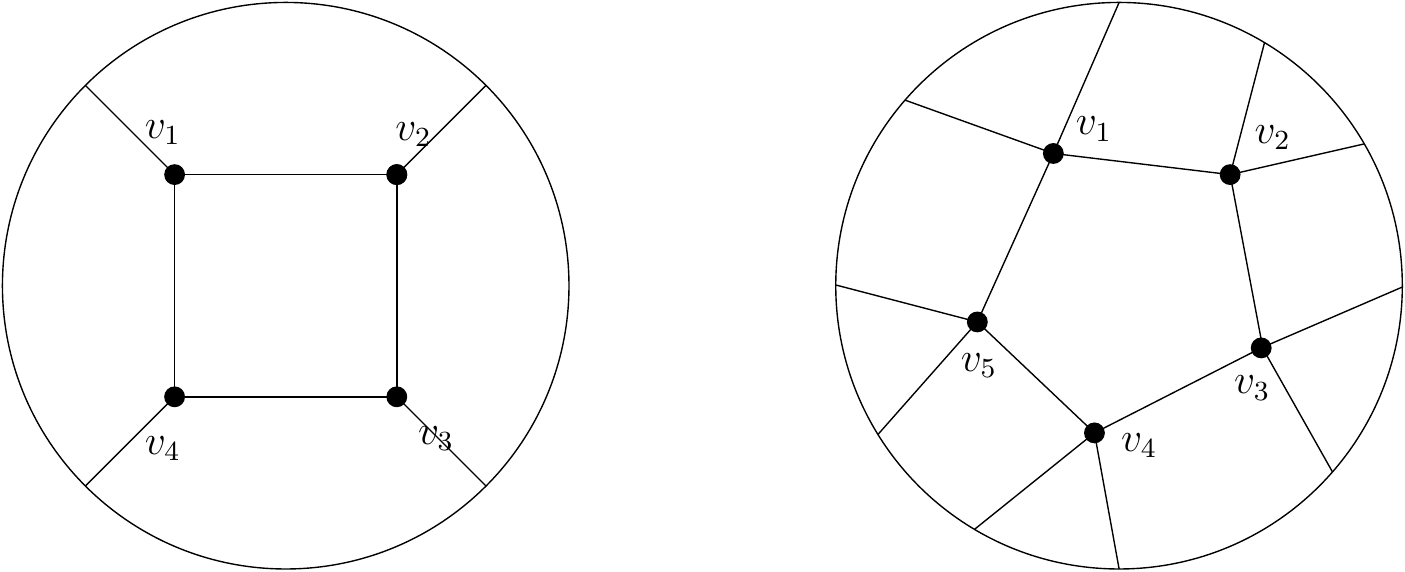}
   \caption{$K_4$ and $K_5$ embedded in the projective plane}
   \label{fig:3}
\end{figure}

Let us consider the vertex $v_1$ and the edges $v_1v_3$ and $v_2v_4$
(if $r=4$), and $v_1v_3$, $v_1v_4$ and $v_2v_5$ (if $r=5$).
These edges are
embedded as shown in Figure \ref{fig:3}. Suppose that $v_1$ has
two neighbors $a,b\notin V(C)$ such that the cyclic order around
$v_1$ is $v_1v_4,v_1a,v_1v_3,v_1b$ when $r=4$ and
$v_1v_5,v_1a,v_1v_s,v_1b$ (where $s=3$ or $s=4$) when $r=5$. Then
we can re-embed the edge $v_1v_3$ (if $r=4$) or re-embed the edges
$v_1v_3$ and $v_1v_4$ (if $r=5$) into the face bounded by $C$ and
then add an edge joining two nonadjacent neighbors of $v_1$.
Again, we are done by applying the induction hypothesis.

Thus we may assume henceforth that all neighbors of each vertex
$v_i$ that are not on $C$ are contained in a single face of $K$.
If a face $F$ of $K$ contains at least one vertex that is not on
$C$, then each vertex of $C$ on the boundary of $F$ has a neighbor
inside $F$. If not, we would be able to add an edge and would be
done by applying induction. Since any two faces of $K$ have a
vertex in common, the aforementioned property implies that at most
one face of $K$ contains any vertices of $G$. If $r=5$, this
implies that (c) is satisfied. Thus $r=4$ and since $G$ is
non-planar, there is a face $F$ of $K$ that contains vertices of
$G$ in its interior. We may assume that $F$ contains the edge
$v_1v_2$ on its boundary. Now, if we re-embed the edge $v_1v_2$
into the face of $K$ distinct from $F$ and $C$, we obtain a new face
containing the former face bounded by $C$ that
is of length at least 5. Thus we get into one of the above cases,
and we are done.
\end{proof}

\begin{proof}[Proof of Theorem \ref{thm: main 3-list}]
The proof is by induction on the number of vertices, $n=|G|$. Let
$L$ be a 3-list-assignment for $G$. Let us first suppose that $G$
is a triangulation. By Theorem \ref{thm:unavoidable} and Lemmas
\ref{lem:C1}--\ref{lem:C4}, $G$ contains a reducible configuration
$C$ on $k\le9$ vertices. By the induction hypothesis, $G-V(C)$ has
at least $2^{(n-k)/9}$ arboreal $L$-colorings. Since $C$ is
reducible, each of these colorings extends to $G$ in at least two
ways, giving at least $2\times 2^{(n-k)/9}\ge 2^{n/9}$ arboreal
$L$-colorings in total.

If $G$ is a spanning subgraph of a triangulation, we apply the above to
the triangulation containing $G$. Otherwise, Proposition \ref{prop:pp}
shows that $G$ contains vertices $u,v$ of low degree such that
$G-u-v$ is a spanning subgraph of a triangulation $G'$.
By the induction hypothesis, $G'$ has at least $2^{(n-2)/9}$
$L$-colorings. By properties (b) and (c) of the proposition,
each of them can be extended to $G$ in at least two ways by applying
Lemma \ref{lem:Basic Lemma}, and we conclude as before.
\end{proof}

\end{document}